\newtheorem{thm}{Theorem}[section]
\newtheorem{cor}[thm]{Corollary}
\newtheorem{lem}[thm]{Lemma}
\newtheorem{con}[thm]{Construction}
\theoremstyle{definition}
\newtheorem{ex}[thm]{Example}
\tikzstyle{vertex}=[circle,draw,inner sep=0pt, minimum size=6pt]
\DeclareMathOperator\Aut{{\rm Aut}}
\DeclareMathOperator\Cos{{\rm Cos}}
\DeclareMathOperator\Sym{{\rm Sym}}
\DeclareMathOperator\GF{{\rm GF}}
\newcommand\Wr{{\rm \,wr\, }}
\newcommand{\ep}{\varepsilon}
\newcommand{\norml}{\vartriangleleft}
\newcommand{\liso}{\lesssim}
\DeclareMathOperator\PSL{{\rm PSL}}
\DeclareMathOperator\AGL{{\rm AGL}}
\DeclareMathOperator\PGL{{\rm PGL}}
\DeclareMathOperator\GL{{\rm GL}}
\DeclareMathOperator\PGammaL{{\rm P}\Gamma{\rm L}}
\DeclareMathOperator\PA{{\rm PA}}
\DeclareMathOperator\M{{\rm M}}
\DeclareMathOperator\Out{{\rm Out}}
\DeclareMathOperator{\soc}{soc}
\newcommand\GAP{\textsf{GAP}}
\title{Locally s-arc-transitive graphs arising\\ from product action}
\author{ Michael Giudici \\ Department of Mathematics and Statistics\\ The University of Western Australia\\ Perth WA 6009, Australia\\ \\
Eric Swartz \\Department of Mathematics\\
 William \& Mary\\ P.O. Box 8795\\
  Williamsburg, VA 23187-8795, USA}
\begin{document}

\maketitle

\begin{abstract}
 We study locally $s$-arc-transitive graphs arising from the quasiprimitive product action (PA).  We prove that, for any locally $(G,2)$-arc-transitive graph with $G$ acting quasiprimitively with type PA on both $G$-orbits of vertices, the group $G$ does not act primitively on either orbit.  Moreover, we construct the first examples of locally $s$-arc-transitive graphs of PA type that are not standard double covers of $s$-arc-transitive graphs of PA type, answering the existence question for these graphs.
\end{abstract}

\section{Introduction}

For an integer $s\geqslant 1$, an $s$-arc in a graph $\Gamma$ is an $(s+1)$-tuple $(\alpha_0,\alpha_1,\ldots,\alpha_s)$ of vertices such that $\alpha_i\sim \alpha_{i+1}$ and $\alpha_i\neq \alpha_{i+2}$ for each $i$. We say that $\Gamma$ is \emph{$s$-arc-transitive} if the automorphism group of $\Gamma$ acts transitively on the set of all $s$-arcs. If $\Gamma$ is $s$-arc-transitive and each $(s-1)$-arc can be extended to an $s$-arc then any $s$-arc-transitive graph is also $(s-1)$-arc-transitive.
 The study of $s$-arc-transitive graphs goes back to the pioneering work of Tutte \cite{tutte,tutte2}, who showed that if $\Gamma$ has valency three then $s\leqslant 5$.  Weiss \cite{weiss1} later showed that if the valency restriction is relaxed to allow valency at least three then $s\leqslant 7$, with equality holding for the generalised hexagons arising from the groups $G_2(q)$ for $q=3^f$.

Praeger \cite{quasiprim1} initiated a programme for the study of finite connected $s$-arc-transitive graphs by first showing that if $G\leqslant \Aut(\Gamma)$ acts transitively on the set of all $s$-arcs of $\Gamma$ and $N\norml G$ has at least three orbits on the set of vertices, then the quotient graph $\Gamma_N$ whose vertices are the orbits of $N$ is also $s$-arc-transitive.  Moreover, $\Gamma$ is a cover of $\Gamma_N$. This reduces the study of finite connected $(G,s)$-arc-transitive graphs to two basic types:
\begin{itemize}
\item those where $G$ is \emph{quasiprimitive} on the set of vertices, that is, where all nontrivial normal subgroups of $G$ are transitive on vertices;
\item those where $G$ is \emph{biquasiprimitive} on the set of vertices, that is, where all nontrivial normal subgroups of $G$ have at most two orbits on vertices and there is a normal subgroup with two orbits.
\end{itemize}
Praeger showed that of the eight types of  finite quasiprimitive groups, only four --- HA (affine),  TW (twisted wreath), AS (almost simple) and PA (product action) --- can act 2-arc-transitively on a graph \cite{quasiprim1}. We use the types of quasiprimitive groups as given in \cite{quasiprim2} and define type PA, the main focus of this paper, in Section \ref{sec:PA}. These are slight variations on the types of primitive permutation groups given by the O'Nan--Scott Theorem.  All graphs of type HA  were classified by Praeger and Ivanov \cite{affine2trans} while those of type TW were studied by Baddeley \cite{2arctw}. The 2-arc-transitive graphs for some families of almost simple groups have all been classified, for example the Suzuki groups \cite{2arcsuz}, Ree groups \cite{2arcree} and $\PSL(2,q)$ \cite{2arcpsl}. The first examples of 2-arc-transitive graphs of PA type were given by Li and Seress \cite{prodaction} and studied further by Li, Seress, and Song \cite{Li2015}.  Another family of quasiprimitive 2-arc-transitive graphs of PA type were constructed by Li, Ling, and Wu in \cite{LiLingWu}.

In the biquasiprimitive case the graph is bipartite and such graphs were investigated in \cite{bipartite2arc,biquasi}. An alternative way to study such graphs is via the notion of local $s$-arc-transitivity. We say that a graph $\Gamma$ is \emph{locally $(G,s)$-arc-transitive} for a group $G\leqslant \Aut(\Gamma)$ if for each vertex $\alpha$, the vertex stabiliser $G_\alpha$ acts transitively on the set of all $s$-arcs starting at $\alpha$.  If $G$ also acts transitively on the set of vertices then  $\Gamma$ is $s$-arc-transitive. If $\Gamma$ is locally $(G,s)$-arc-transitive but $G$ is intransitive on the set of vertices, then $G$ has two orbits on vertices and $\Gamma$ is bipartite.  One way to construct locally $s$-arc-transitive graphs is to start with an $s$-arc-transitive graph  $\Gamma$ and take its standard double cover $\Sigma$, which has vertex set $V\Gamma\times\{1,2\}$ and $(\alpha,i)\sim(\beta,j)$ precisely when $i\neq j$ and $\alpha\sim \beta$ in $\Gamma$.   Then $\Aut(\Gamma)$ acts as automorphisms on $\Sigma$ with two orbits on vertices and $\Sigma$ is  locally $(\Aut(\Gamma),s)$-arc-transitive \cite{localsarc}.

If $\Gamma$ is a bipartite graph and $G\leqslant \Aut(\Gamma)$ acts transitively on the set of vertices, then $\Gamma$ is locally $(G^+,s)$-arc-transitive where $G^+$ is  the index two subgroup that stabilises each part of the bipartition. Hence the study of locally $s$-arc-transitive graphs encompasses the study of all bipartite $s$-arc-transitive graphs and hence  the biquasiprimitive case in Praeger's programme. It is also a wider class of graphs as the known generalised octagons are locally 9-arc-transitive but not vertex-transitive, and it has been shown by van Bon and Stellmacher \cite{sleq9} that this is best possible. 

A programme for the study of finite connected locally $s$-arc-transitive graphs was mapped out by Giudici, Li and Praeger \cite{localsarc}. If $\Gamma$ is locally $(G,s)$-arc-transitive with $G$ having two orbits on vertices and $N\norml G$ is intransitive on both $G$-orbits, then the quotient graph $\Gamma_N$ is also locally  $s$-arc-transitive.  Moreover, $\Gamma$ is a cover of $\Gamma_N$. This reduces the study of finite connected locally $(G,s)$-arc-transitive graphs for which $G$ is vertex-intransitive into two basic types:
\begin{itemize}
\item those where $G$ is quasiprimitive on each of its two orbits on vertices;
\item those where $G$ is quasiprimitive on only one of its two orbits on vertices.
\end{itemize}
In the second case, it was shown \cite{localsarc} that the quasiprimitive action must be of type HA, HS, AS, PA or TW. These were further studied in \cite{localstar} where all examples where the quasiprimitive action has type HS or PA were classified. An infinite family of examples where the quasiprimitive action has type TW was given by Kaja and Morgan \cite{KM}. In the first case,  either the two quasiprimitive actions have the same quasiprimitive type and are one of HA, AS, TW or PA, or they are different with one of type SD and one of type PA \cite{localsarc}. All 2-arc-transitive graphs of the latter type were classified in \cite{localdifferent} and there are locally 5-arc-transitive examples in this case \cite{5arc}. It was shown in \cite[Lemma 3.2]{biprimcubic} that all  locally 2-arc-transitive graphs where the quasiprimitive action is of type HA on both orbits are actually vertex-transitive and so classified in \cite{affine2trans}. All locally $(G,2)$-arc-transitive graphs have been classified in the cases where $G$ is an almost simple group whose socle is a Ree group \cite{localrees}, Suzuki group \cite{localsuz}, or $\PSL(2,q)$ \cite{BDSL}, while the sporadic group case was studied in \cite{localsporadic}.  Examples also exist in the PA and TW cases as we can take standard double covers of $s$-arc-transitive graphs of type PA and TW respectively.

The aim of this paper is to study locally $s$-arc-transitive graphs of PA type.  We prove that, for any locally $(G,2)$-arc-transitive graph with $G$ acting quasiprimitively with type PA on both $G$-orbits of vertices, the group $G$ does not act primitively on either orbit. Moreover, in the spirit of \cite{prodaction}, we solve the existence problem for locally $2$-arc-transitive graphs of PA type.  In particular, we construct the first examples of locally $s$-arc-transitive graphs of PA type that are not standard double covers of $s$-arc-transitive graphs of PA type.

\section{PA type}\label{sec:PA}

Let $G$ act quasiprimitively on a set $\Omega$. We say that $G$ has type \emph{PA} if there exists a $G$-invariant partition $\mathcal{B}$ of $\Omega$ such that $G$ acts faithfully on $\mathcal{B}$ and we can identify $\mathcal{B}$ with $\Delta^k$ for some set $\Delta$ and $k\geqslant 2$ such that $G\leqslant H\Wr S_k$ acts in the usual product action of a wreath product on $\Delta^k$,  where $H\leqslant \Sym(\Delta)$ is an almost simple group acting quasiprimitively on $\Delta$.  Moreover, if $T=\soc(H)$ then $G$ has a unique minimal normal subgroup $N=T^k$. Note that since $G$ is quasiprimitive,  $N$ acts transitively on $\Omega$ and hence on $\mathcal{B}$. Thus $G=NG_\alpha=NG_B$, where $B \in \mathcal{B}$ is a block containing $\alpha \in \Omega$. As $N$ is minimal normal in $G$ we have that $G$ transitively permutes the simple direct factors of $N$ and hence so do both $G_\alpha$ and $G_B$. Thus given $B=(\delta,\ldots,\delta)\in\mathcal{B}$ we may assume that $N_B=T_\delta^k$ and for $\alpha\in B$ we have that $N_\alpha$ is a subdirect product of $N_B$, that is, the projection of $N_\alpha$ onto each direct factor is isomorphic to $T_\delta$.  

Let $R=T_\delta$. Following the terminology of \cite{prodaction}, if $N_\alpha \cong R$ then we call $N_\alpha$ a \emph{diagonal subgroup} of $N_B=R^k$. Then there exists automorphisms $\varphi_2,\varphi_3,\ldots,\varphi_k$ of $R$ such that 
$$N_\alpha=\{(t,t^{\varphi_2},\ldots,t^{\varphi_k})\mid t\in R\}$$
If each of the $\varphi_i$ is the trivial automorphism then we call $N_\alpha$ a \emph{straight diagonal subgroup} while if some $\varphi_i$ is nontrivial then we call $N_\alpha$ a \emph{twisted diagonal subgroup}. Furthermore, if $N_\alpha\not\cong R$ then we refer to $N_\alpha$ as being a \emph{nondiagonal subgroup.} We refer to the quasiprimitive permutation group $G$ of type PA as being of \emph{straight diagonal}, \emph{twisted diagonal} or nondiagonal type according to the type of $N_\alpha$.

Note that unlike for primitive groups of type PA, $G$ does not necessarily preserve a product structure on $\Omega$, only on some $G$-invariant partition $\mathcal{B}$. Indeed the following result shows that for locally 2-arc-transitive graphs this partition must be nontrivial on each of the bipartite halves.

\begin{thm}\label{thm:PA1}
Let $\Gamma$ be a locally $(G,2)$-arc-transitive connected graph with $G$ quasiprimitive of type PA on both orbits $\Omega_1$ and $\Omega_2$. Let $N=T^k=\soc(G)$ and for $i=1,2$, let $\mathcal{B}_i$ be a $G$-invariant partition of $\Omega_i$ such that $G$ preserves a product structure $\Delta_i^k$ on each $\mathcal{B}_i$.   Then $\mathcal{B}_i\neq \Omega_i$ for each $i$.
\end{thm}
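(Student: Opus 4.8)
The plan is to argue by contradiction: suppose $\mathcal{B}_1 = \Omega_1$, i.e.\ $G$ acts primitively on $\Omega_1$ with the product structure $\Delta_1^k$. Since $G$ is locally $(G,2)$-arc-transitive, both vertex stabilisers are maximal in... no, that is false in general; rather, I would extract the key consequence of local $2$-arc-transitivity, namely that for $\alpha \in \Omega_1$ and a neighbour $\beta \in \Omega_2$, the stabiliser $G_\alpha$ acts $2$-transitively on the neighbourhood $\Gamma(\alpha) \subseteq \Omega_2$, and in particular transitively, so $|\Gamma(\alpha)|$ divides $|G_\alpha|$ and the edge stabiliser $G_{\alpha\beta} = G_\alpha \cap G_\beta$ has index $|\Gamma(\alpha)|$ in $G_\alpha$ and index $|\Gamma(\beta)|$ in $G_\beta$. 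Connectedness then gives $G = \langle G_\alpha, G_\beta\rangle$.

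The next step is to exploit the structure of $N = T^k$ on each side. On $\Omega_1$, primitivity with product action forces (by the O'Nan--Scott analysis recalled in Section~\ref{sec:PA}) that $N_\alpha$ is a full diagonal subgroup $R^{(\varphi)} = \{(t, t^{\varphi_2}, \dots, t^{\varphi_k}) : t \in R\}$ of $N_B = R^k$ where $B \ni \alpha$ — indeed when $\mathcal{B}_1 = \Omega_1$ the block $B$ is the singleton $\{\alpha\}$, so $N_\alpha = N_B$ is itself an $R$-subgroup of $T^k$, and primitivity forces $R = T_\delta$ with $T_\delta$ maximal in $T$ (the "diagonal-type" case need not occur here — in fact when $\mathcal B_1=\Omega_1$ we are simply in ordinary PA, so $N_\alpha$ is a subdirect product whose projections are the point stabilisers $T_\delta$, and maximality of $T_\delta$ in $T$ is what primitivity on $\Delta_1$ buys us). On $\Omega_2$, by Theorem hypothesis $G$ is quasiprimitive of type PA with the same socle $N = T^k$, so $N_\beta$ is again a subdirect product of $N_{B'} = (T_{\delta'})^k$ for the block $B' \ni \beta$ of $\mathcal{B}_2$. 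The crucial numerical relation is
$$
\frac{|N_\alpha|}{|N_{\alpha\beta}|} = \frac{|N_\beta|}{|N_{\alpha\beta}|} \cdot \frac{|N_\beta|^{-1}|N_\alpha|}{1},
$$
which I would instead phrase via the orbit–counting identity $|N_\alpha : N_{\alpha\beta}| = |\beta^{N_\alpha}|$ and $|N_\beta : N_{\alpha\beta}| = |\alpha^{N_\beta}|$, combined with the fact that $N$ is transitive on each orbit so $|N_\alpha| \cdot |\Omega_1| = |N| = |N_\beta| \cdot |\Omega_2|$.

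The heart of the argument, and the step I expect to be the main obstacle, is deriving a contradiction from the interaction between the two diagonal/subdirect structures under $G = \langle G_\alpha, G_\beta \rangle$. The idea is that if $\mathcal{B}_1 = \Omega_1$ then $N_\alpha \cong T$ acts on $\Gamma(\alpha)$, and since $G_\alpha$ is $2$-transitive on $\Gamma(\alpha)$ with $N_\alpha \norml G_\alpha$, the group $N_\alpha \cong T$ acts on $\Gamma(\alpha)$ either trivially or transitively; one shows it must be transitive (else $N$ would fix a vertex of $\Omega_2$, contradicting quasiprimitivity on $\Omega_2$), and then one compares the induced action of $N_\alpha$ with the product-action structure of $\mathcal B_2$ on $\Omega_2$: the block $B' \in \mathcal B_2$ containing $\beta$ is normalised by $N_\alpha$-conjugates only if $N_\alpha$ respects a $k$-fold product decomposition on $\beta^{N_\alpha}$, forcing $N_{\alpha\beta}$ to project onto $T_{\delta'}$ in each coordinate. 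Counting the number of coordinates moved, together with the fact that $G$ transitively permutes the $k$ simple factors of $N$ from both $G_\alpha$ and $G_\beta$, should yield that $k$ divides incompatible quantities, or that $T_\delta$ (maximal in $T$) would have to contain a subgroup of index equal to a power arising from the $\Omega_2$ side that is too small — ultimately contradicting the simplicity of $T$ or the maximality of $T_\delta$. I would pin this down by the standard trick: $N_\alpha N_\beta \leqslant N$ is a subgroup only if one normalises the other, which it does not, so instead one uses that $\langle N_\alpha, N_\beta \rangle = N$ (since $N$ is a minimal normal subgroup of the connected group $G = \langle G_\alpha, G_\beta\rangle$) and that $N_\alpha$, $N_\beta$ are both "small" (diagonal-like, of order roughly $|T|$) — a generation statement that becomes impossible when $k \geqslant 2$ unless $N_\alpha$ or $N_\beta$ fails to be a genuine diagonal subgroup, i.e.\ unless the corresponding partition is nontrivial. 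Assembling these constraints for each $i$ in turn gives $\mathcal{B}_i \neq \Omega_i$.
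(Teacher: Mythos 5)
There is a genuine gap, and it starts with a wrong structural claim. If $\mathcal{B}_1=\Omega_1$, then the block of $\mathcal{B}_1$ containing $\alpha=(\omega,\dots,\omega)$ is the singleton $\{\alpha\}$, so $N_\alpha=N_B=T_\omega^{\,k}$ is the \emph{full direct product} of $k$ copies of the point stabiliser $T_\omega$; it is not a diagonal subgroup and is certainly not isomorphic to $T$. Your sketch wavers on this point and then builds the ``heart of the argument'' on the false premises that $N_\alpha\cong T$ acts on $\Gamma(\alpha)$ and that $N_\alpha,N_\beta$ are both ``diagonal-like, of order roughly $|T|$'', so the proposed mechanism (a generation statement $\langle N_\alpha,N_\beta\rangle=N$ that is allegedly impossible for two diagonal subgroups when $k\geqslant 2$) does not apply to the configuration being refuted. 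Two further problems: $\langle N_\alpha,N_\beta\rangle=N$ does not follow from minimal normality of $N$ together with $G=\langle G_\alpha,G_\beta\rangle$, since $\langle N_\alpha,N_\beta\rangle$ need not be normal in $G$; and the hypothesis is quasiprimitivity of type PA, not primitivity, so you may not assume $T_\omega$ is maximal in $T$. Most importantly, your final paragraph never actually derives a contradiction --- it lists candidate contradictions (``should yield'', ``ultimately contradicting the simplicity of $T$ or the maximality of $T_\delta$'') without establishing any of them.

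The idea the proposal is missing is to use the full-product structure of $N_\alpha=T_\omega^{\,k}$ on the \emph{other} side of the bipartition. Since $G_\alpha^{\Gamma(\alpha)}$ is $2$-transitive and preserves the partition of $\Gamma(\alpha)$ induced by $\mathcal{B}_2$, either all neighbours of $\alpha$ lie in one block of $\mathcal{B}_2$ (ruled out by connectedness, since then every vertex of $\Omega_1$ would have all its neighbours in a single block) or they lie in pairwise distinct blocks, so $G_\alpha$ acts $2$-transitively on the set $X$ of blocks of $\mathcal{B}_2$ meeting $\Gamma(\alpha)$. The normal subgroup $N_\alpha$ is transitive on $X$, and because it is a full direct product its orbit factorises coordinatewise: $X=\delta_1^{T_\omega}\times\cdots\times\delta_k^{T_\omega}$. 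A short argument (projection of the factor stabiliser onto $H_\omega$ in the first coordinate, plus transitivity of $G_\alpha$ on the $k$ simple factors) upgrades this to $X=A^k$ for a single set $A$ with $|A|\geqslant 2$. But a $2$-transitive group cannot preserve a nontrivial Cartesian product structure: no element of $H_{\omega\delta}\Wr S_k$ maps $(\delta',\delta,\dots,\delta)$ to $(\delta',\delta',\delta,\dots,\delta)$. That single, concrete incompatibility is the contradiction; your numerical and generation-theoretic considerations are not needed and, as formulated, would not close the argument.
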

\begin{proof}
Suppose that $\mathcal{B}_i=\Delta_i$ for some $i$. Without loss of generality suppose that $i=1$. Also note that there is an almost simple group $H$ with socle $T$ such that $G\leqslant H\Wr S_k$. 

Let $\alpha=(\omega,\ldots,\omega)\in\Omega_1$. Then $N_{\alpha}=T_\omega^k$ with $T_\omega\neq 1$ and $G_{\alpha}=G\cap (H_\omega\Wr S_k)$. 
 By \cite[Lemma 3.2]{localsarc}, $G_{\alpha}^{\Gamma(\alpha)}$ is 2-transitive so either all neighbours of $\alpha$ lie in same block of $\mathcal{B}_2$ or in distinct blocks.  If they all lie in the same block then for each $\beta\in\Delta_1$ we have that the neighbours of $\beta$ lie in the same block. However, this contradicts $\Gamma$ being connected. Hence for each $\alpha\in\Delta_1$, the neighbours of $\alpha$ lie in distinct blocks. Hence $G_\alpha$ acts 2-transitively on the set $X$ of blocks of $\mathcal{B}_2$ that contain neighbours of $\alpha$. By \cite[Lemma 6.2]{localsarc}, $N_\alpha^{\Gamma(\alpha)}$ is a transitive subgroup of the 2-transitive group $G_\alpha^{\Gamma(\alpha)}$ and so $N_\alpha$ also acts transitively on $X$.  Let $B=(\delta_1,\delta_2,\ldots,\delta_k)\in\mathcal{B}_2$ be a block containing a neighbour~$\gamma$ of $\alpha$. Then $X=(\delta_1,\delta_2,\ldots,\delta_k)^{N_\alpha}=\delta_1^{T_{\omega}}\times\delta_2^{T_{\omega}}\times\cdots\times \delta_k^{T_{\omega}}$. By \cite[Theorem 1.1(b)]{PS}, the stabiliser $G_1$ in $G$ of the first simple direct factor of $N$ projects onto $H$ in the first coordinate and so $(G_1)_\alpha$ projects onto $H_\omega$ in the first coordinate. Hence $\delta_1^{T_\omega}=\delta_1^{H_{\omega}}$. Since $G_\alpha\leqslant H_\omega\Wr S_k$ and transitively permutes the $k$ simple direct factors of $N$, it follows that $\delta_i^{T_\omega}=\delta_1^{T_\omega}$ for each $i$. In particular, $X=A^k$ for some set $A$ and we could have chosen $B=(\delta,\ldots,\delta)$ for some $\delta\in\Delta_2$. Thus $G_{\alpha\gamma}\leqslant G_{\alpha, B}\leqslant H_{\omega\delta}\Wr S_k$. However, for $\delta'\in A\backslash\{\delta\}$ there is no element of $H_{\omega\delta}\Wr S_k$ mapping $(\delta',\delta,\ldots,\delta)$ to $(\delta',\delta',\delta,\ldots,\delta)$, contradicting $G_\alpha$ acting 2-transitively on $X$. Thus $\mathcal{B}_1\neq \Omega_1$. 
\end{proof}
\begin{cor}
Let $\Gamma$ be a locally $(G,2)$-arc-transitive connected graph with $G$ quasiprimitive of type PA on both orbits. Then $G$ is not primitive on either orbit.
\end{cor}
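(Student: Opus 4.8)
The plan is to obtain the corollary immediately from Theorem~\ref{thm:PA1} together with the definition of type PA given in Section~\ref{sec:PA}. Suppose, for a contradiction, that $G$ acts primitively on at least one of its two orbits on vertices; relabelling the orbits if necessary, say this orbit is $\Omega_1$. Because $G$ is quasiprimitive of type PA on $\Omega_1$, the definition furnishes a $G$-invariant partition $\mathcal{B}_1$ of $\Omega_1$ on which $G$ acts faithfully and which may be identified with $\Delta_1^k$, for the same $k\geqslant 2$ with $\soc(G)=T^k$, so that $G\leqslant H\Wr S_k$ in the product action, where $H\leqslant\Sym(\Delta_1)$ is almost simple with socle~$T$.

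The one point that needs checking is that $\mathcal{B}_1$ is not the trivial partition of $\Omega_1$ into a single block. This is clear: since $H$ is almost simple and faithful on $\Delta_1$ we have $|\Delta_1|\geqslant 2$ (in fact $|\Delta_1|\geqslant 5$), hence $|\mathcal{B}_1|=|\Delta_1|^k\geqslant 4$. Now apply primitivity: if $G$ is primitive on $\Omega_1$, then the only $G$-invariant partitions of $\Omega_1$ are the partition into singletons and the one-block partition. As $\mathcal{B}_1$ is not the one-block partition, it must be the partition into singletons, i.e. $\mathcal{B}_1=\Omega_1$ under the natural identification. But $\Gamma$ is locally $(G,2)$-arc-transitive and connected with $G$ quasiprimitive of type PA on both orbits, so Theorem~\ref{thm:PA1} gives $\mathcal{B}_1\neq\Omega_1$ --- a contradiction. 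Running the same argument with the roles of $\Omega_1$ and $\Omega_2$ interchanged shows that $G$ is primitive on neither orbit.

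I do not expect a genuine obstacle in this deduction; all of the real content sits inside Theorem~\ref{thm:PA1}. The only things that require a little care are purely bookkeeping: confirming that the PA-partition $\mathcal{B}_1$ genuinely has more than one block, so that primitivity forces it to be the singleton partition rather than the one-block partition; and recognising that ``$\mathcal{B}_1=\Omega_1$'' in the statement of Theorem~\ref{thm:PA1} is precisely the assertion that $\mathcal{B}_1$ is the partition into singletons, which is exactly what primitivity yields here.
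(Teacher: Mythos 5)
Your deduction is correct and is exactly the argument the paper intends (the corollary is stated without proof as an immediate consequence of Theorem~\ref{thm:PA1}): primitivity would force the faithful, hence non-trivial, PA-partition $\mathcal{B}_i$ to be the partition into singletons, contradicting $\mathcal{B}_i\neq\Omega_i$. No issues.
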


\section{Constructions}

Let $G$ be a finite group with subgroups $L$ and $R$. Let $\Delta_1$ be the set $[G:L]$ of right cosets of $L$ in $G$ and $\Delta_2$ be the set $[G:R]$ of left cosets of $R$ in $G$. We define the \emph{coset graph} $\Gamma=\Cos(G,L,R)$ to be the bipartite graph with vertex set the disjoint union $\Delta_1\cup\Delta_2$ such that $\{Lx,Ry\}$ is an edge if and only if $Lx\cap Ry\neq \varnothing$, or equivalently $xy^{-1}\in LR$. Then $G$ acts by right multiplication on both $\Delta_1$ and $\Delta_2$, and induces automorphisms of $\Gamma$. Note that the vertices in $\Delta_1$ have valency $|L:L\cap R|$ while the vertices in $\Delta_2$ have valency $|R:L\cap R|$. We say that $\Gamma$ has  \emph{valency} $\{|L:L\cap R|,|R:L\cap R|\}$.  Conversely, if $\Gamma$ is a graph and $G\leqslant\Aut(\Gamma)$ acts transitively on the set of edges of $\Gamma$ but not on the set of vertices then $\Gamma$  can be constructed in this way \cite[Lemma 3.7]{localsarc}. We refer to the triple $(L,R,L\cap R)$ as the associated \emph{amalgam}.

 We collect the following properties of coset graphs. We say that a subgroup $H$ of a group $G$ is \emph{core-free} if $\cap _{g\in G} H^g=1$.

\begin{lem}\label{lem:cosets} \cite[Lemma 3.7]{localsarc} Let $G$ be a group with proper subgroups $L$ and $R$, and let $\Gamma=\Cos(G,L,R)$. 
\begin{enumerate}
\item $\Gamma$ is connected if and only if $G=\langle L,R\rangle$.
\item $G$ acts faithfully on both $L$ and $R$ if and only if both $L$ and $R$ are core free in $G$.
\item $G$ acts transitively on the set of edges of $\Gamma$.
\item $\Gamma$ is locally $(G,2)$-arc-transitive if and only if $L$ acts 2-transitively on $[L:L\cap R]$ and $R$ acts 2-transitively on $[R:L\cap R]$.
\end{enumerate}
\end{lem}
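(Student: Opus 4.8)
The plan is to treat the four assertions separately after fixing common notation. Throughout, $G$ acts on $\Delta_1=[G:L]$ and on $\Delta_2=[G:R]$ by right multiplication, $Lx\cdot g=Lxg$ and $Ry\cdot g=Ryg$; this action is transitive on each $\Delta_i$ and preserves the partition $\{\Delta_1,\Delta_2\}$ of $V\Gamma$, so $\Delta_1$ and $\Delta_2$ are exactly the two $G$-orbits on vertices. Writing $\alpha_1=L\cdot 1\in\Delta_1$ and $\alpha_2=R\cdot 1\in\Delta_2$ for the base vertices, one has $G_{\alpha_1}=\{g: Lg=L\}=L$ and $G_{\alpha_2}=R$. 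I would also record the two neighbourhoods at the outset: $Ry$ meets $L$ exactly when $Ry=Rl$ for some $l\in L$, so $\Gamma(\alpha_1)=\{Rl: l\in L\}$, and for $l,l'\in L$ we have $Rl=Rl'$ iff $l'l^{-1}\in L\cap R$; hence $\Gamma(\alpha_1)$, with its $L$-action by right multiplication, is permutation-isomorphic to the coset space $[L:L\cap R]$, and symmetrically $\Gamma(\alpha_2)$ with its $R$-action is permutation-isomorphic to $[R:L\cap R]$.

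Parts (1)--(3) are then short coset computations. For (3): both $L$ and $R$ contain the identity, so $\{\alpha_1,\alpha_2\}$ is an edge, and if $\{Lx,Ry\}$ is any edge then choosing $z\in Lx\cap Ry$ gives $Lx=Lz$ and $Ry=Rz$, so $\{\alpha_1,\alpha_2\}\cdot z=\{Lz,Rz\}=\{Lx,Ry\}$; thus $G$ is transitive on edges. For (2): the kernel of the $G$-action on $[G:L]$ consists of those $g$ with $xgx^{-1}\in L$ for all $x\in G$, i.e.\ it is $\bigcap_{x\in G}L^x$, the core of $L$ in $G$, so the action on $\Delta_1$ is faithful precisely when $L$ is core-free, and likewise for $R$ and $\Delta_2$. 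For (1): tracing alternating walks out of $\alpha_1$ shows that the vertices in $\Delta_1$ lying in the connected component of $\alpha_1$ form exactly $\{Lw: w\in\langle L,R\rangle\}$ and the vertices in $\Delta_2$ in that component form $\{Rw: w\in\langle L,R\rangle\}$; since $\langle L,R\rangle$ is a union of right cosets of $L$ containing $L$, this component is all of $V\Gamma$ — equivalently $\Gamma$ is connected — if and only if $\langle L,R\rangle=G$.

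Part (4) is the substantive one, and the key tool is the standard reduction of local $2$-arc-transitivity to $2$-transitivity on neighbourhoods. Since $\Gamma$ is $G$-edge-transitive and bipartite, a $2$-arc from a vertex $\alpha$ is a triple $(\alpha,\beta,\gamma)$ with $\beta\in\Gamma(\alpha)$ and $\gamma\in\Gamma(\beta)\setminus\{\alpha\}$, and I claim $\Gamma$ is locally $(G,2)$-arc-transitive if and only if $G_\alpha^{\Gamma(\alpha)}$ is $2$-transitive for every vertex $\alpha$. One direction is \cite[Lemma 3.2]{localsarc}; for the converse, given $G_\alpha^{\Gamma(\alpha)}$ $2$-transitive for all $\alpha$ and two $2$-arcs $(\alpha,\beta,\gamma)$ and $(\alpha,\beta',\gamma')$, first use transitivity of $G_\alpha$ on $\Gamma(\alpha)$ to find $g\in G_\alpha$ with $\beta^g=\beta'$, then note $(G_{\beta'})_\alpha=G_{\alpha\beta'}$ is transitive on $\Gamma(\beta')\setminus\{\alpha\}$ (because $G_{\beta'}^{\Gamma(\beta')}$ is $2$-transitive) and pick $h\in G_{\alpha\beta'}$ with $(\gamma^g)^h=\gamma'$; then $gh\in G_\alpha$ carries the first $2$-arc to the second. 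By vertex-transitivity of $G$ on $\Delta_1$ and on $\Delta_2$ it suffices to check the $2$-transitivity condition at $\alpha_1$ and $\alpha_2$, and by the neighbourhood identifications of the first paragraph this says precisely that $L$ is $2$-transitive on $[L:L\cap R]$ and $R$ is $2$-transitive on $[R:L\cap R]$, giving the stated equivalence.

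I expect the \emph{main obstacle} to be the bookkeeping in part (4): one must check that the middle vertex $\beta$ of a $2$-arc from $\alpha$ genuinely ranges over a single $G_\alpha$-orbit — this is exactly where local $1$-arc-transitivity (transitivity of $G_\alpha$ on $\Gamma(\alpha)$) is used, and it also supplies the transitivity of $G_\beta$ on $\Gamma(\beta)$ needed to read off $2$-transitivity of $G_\beta^{\Gamma(\beta)}$; that the induced action on each neighbourhood is literally the right-multiplication coset action, so that ``$2$-transitive on $\Gamma(\alpha_i)$'' translates to ``$2$-transitive on $[L:L\cap R]$'' (resp.\ $[R:L\cap R]$); and that testing only the two base vertices is enough since there are exactly two $G$-orbits on vertices. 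Parts (1)--(3) are essentially routine once the neighbourhoods and stabilisers have been computed.
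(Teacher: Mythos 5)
The paper does not prove this lemma: it is quoted, with a citation, from \cite[Lemma 3.7]{localsarc}, so there is no in-paper proof to compare against. Your argument is correct and is essentially the standard one from that source — the coset computations for (1)–(3), the identification of $G_{L}=L$, $G_{R}=R$ and of the neighbourhood actions with the coset actions on $[L:L\cap R]$ and $[R:L\cap R]$, and the reduction of local $(G,2)$-arc-transitivity to $2$-transitivity of $G_\alpha^{\Gamma(\alpha)}$ at the two base vertices are all sound.
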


We also need the following result, which essentially follows from the definition of a \textit{completion} (and the \textit{universal completion}) of an amalgam (see \cite{trivalent}) and results on covers of graphs (see, e.g., \cite[Chapter 19]{Biggs}).  The result is truly ``folklore'': while it seems to be taken for granted in the field, we also cannot find an explicit proof in the literature.  We have included a proof here provided by Luke Morgan \cite{LukeLemma}.

\begin{lem}\label{lem:Luke}
If $\Gamma$ is a locally $s$-arc-transitive graph with amalgam $(L,R,L\cap R)$ and $s\geqslant 2$, then any other graph with amalgam $(L,R,L\cap R)$ is locally $s$-arc-transitive.
\end{lem}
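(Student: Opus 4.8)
The plan is to argue via the universal completion of the amalgam and the theory of graph covers. First I would recall that the amalgam $\mathcal{A} = (L, R, L\cap R)$, together with the embeddings of $L\cap R$ into $L$ and into $R$, has a \emph{universal completion}: there is a group $\widetilde G$ generated by (copies of) $L$ and $R$ amalgamated along $L\cap R$ — the free amalgamated product $L *_{L\cap R} R$ — together with the coset graph $\widetilde\Gamma = \Cos(\widetilde G, L, R)$. By Lemma \ref{lem:cosets}(4), since $L$ acts $2$-transitively on $[L:L\cap R]$ and $R$ acts $2$-transitively on $[R:L\cap R]$ (these $2$-transitivity conditions depend only on the amalgam, not on the ambient group), the graph $\widetilde\Gamma$ is locally $(\widetilde G, 2)$-arc-transitive; and in fact, because the completion is universal, $\widetilde\Gamma$ is locally $(\widetilde G, s)$-arc-transitive for the same value of $s$ for which the hypothesis graph $\Gamma$ is locally $s$-arc-transitive — the parameter $s$ is controlled by how the stabiliser amalgam acts on the ball of radius $s$ around a vertex, and this ball is the same in $\widetilde\Gamma$ as in any other graph with the same amalgam.

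Next I would invoke the fact that any connected graph $\Delta$ with amalgam $\mathcal{A}$ and edge-transitive (vertex-intransitive) group $G$ arises as a quotient $\widetilde\Gamma / K$ for a suitable torsion-free normal subgroup $K \norml \widetilde G$ acting freely on $\widetilde\Gamma$ (equivalently, $\Delta = \Cos(G, L, R)$ where $G = \widetilde G / K$ and the images of $L$ and $R$ in $G$ are still isomorphic to $L$ and $R$). Because $K$ acts freely on vertices and on arcs, the covering projection $\widetilde\Gamma \to \Delta$ restricts to a bijection on the ball of radius $s$ about any vertex (for $s$ at most the girth-related bound, but more simply: a local isomorphism on each such ball, since $K$ has no short cycles in its action), and it is equivariant with respect to $\widetilde G \to G$. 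Local $s$-arc-transitivity is a condition checked locally on these balls: given any two $s$-arcs of $\Delta$ starting at a common vertex $v$, lift them through the covering to $s$-arcs of $\widetilde\Gamma$ starting at a common preimage $\tilde v$, use local $(\widetilde G, s)$-arc-transitivity of $\widetilde\Gamma$ to find $\tilde g \in \widetilde G_{\tilde v}$ carrying one to the other, and push $\tilde g$ down to an element of $G_v$ carrying the original $s$-arcs to each other. Hence $\Delta$ is locally $(G, s)$-arc-transitive, and in particular locally $s$-arc-transitive.

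The main obstacle — and the reason this ``folklore'' result deserves a careful proof — is justifying that the covering projection $\widetilde\Gamma \to \Delta$ really does induce a bijection on $s$-balls, i.e.\ that the lifting of $s$-arcs used above is well-defined and unique. This requires knowing that $K = \ker(\widetilde G \to G)$ intersects $L$ and $R$ trivially (so that $L, R$ embed in $G$ and the amalgam is genuinely preserved in the quotient) and, more subtly, that $K$ does not identify points within any $s$-ball; the hypothesis $s \geqslant 2$ is what guarantees the quotient graph has no multiple edges and that the relevant local structure is rigid. One clean way to package this is to note that the stabiliser in $\widetilde G$ of a vertex $\tilde v$ of $\widetilde\Gamma$ is a conjugate of $L$ (or $R$), which maps isomorphically onto the corresponding vertex stabiliser in $G$, and that the arcs at distance at most $s$ are permuted faithfully by this stabiliser in both graphs in the same way — this reduces the whole statement to the single assertion that the local action of the vertex stabiliser on its $s$-ball depends only on the amalgam. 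I would then cite \cite{trivalent} for the amalgam/completion machinery and \cite[Chapter 19]{Biggs} for the covering-space facts, and attribute the argument to Luke Morgan \cite{LukeLemma}.
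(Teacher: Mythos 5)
Your overall strategy --- pass to the universal completion $L *_{L\cap R} R$ acting on its tree $\widetilde\Gamma$, realise every graph $\Delta$ with the given amalgam as a quotient by a normal subgroup $K$, and transfer local $s$-arc-transitivity through the covering --- is exactly the route the paper takes. However, there is a genuine gap at the point you yourself flag as ``the main obstacle,'' and your proposed resolution does not close it. The covering $\widetilde\Gamma \to \Delta$ is \emph{not} in general a bijection (or an isomorphism) on balls of radius $s$: the $s$-ball in the tree is a tree, whereas $\Delta$ may have girth far smaller than $2s$ (nothing in the hypotheses bounds the girth of an arbitrary completion of the amalgam --- generalised polygons already give locally $s$-arc-transitive graphs whose $s$-balls contain cycles), so $K$ certainly can ``have short cycles in its action.'' Likewise, the assertion to which you reduce everything --- ``the local action of the vertex stabiliser on its $s$-ball depends only on the amalgam'' --- is false as a statement about $s$-balls and, read as a statement about $s$-arcs, is essentially the lemma itself; justifying it by ``because the completion is universal'' in your first paragraph is circular. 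Universality gives you the surjection $\widetilde G \to G$; it does not by itself give equality of the arc-transitivity parameters.

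What is true, and what the paper actually proves, is the weaker statement that the quotient map induces an equivariant bijection between the $s$-arcs of $\widetilde\Gamma$ based at $\tilde v$ and the $s$-arcs of $\Delta$ based at its image. This rests only on the fact that each $K$-orbit meets each vertex-neighbourhood in exactly one point, which follows from the equality of the indices $|L : L\cap R|$ and $|R : L\cap R|$ in the tree and in the quotient; one then walks along an arc one step at a time, using this $1$-ball injectivity at each step to show that a lifted group element fixing an initial segment must fix the next vertex as well. Your ``easy'' direction (pushing transitivity down from the tree to a quotient) is fine and is the content of \cite[Lemma 5.1(3)]{localsarc}; the direction you are missing is pulling the local $s$-arc-transitivity of the given graph $\Gamma$ \emph{up} to the tree, which is where the paper's argument (showing that if the tree were only locally $r$-arc-transitive with $r$ less than the parameter of some quotient, it would in fact be locally $(r+1)$-arc-transitive) does the real work. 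As written, your proof establishes only that every completion of the amalgam is locally $2$-arc-transitive, which already follows from Lemma \ref{lem:cosets}(4), not that it is locally $s$-arc-transitive.
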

\begin{proof}
 Let $G:= L *_{L\cap R} R$ be the universal completion of $(L,R,L\cap R)$ and let $\Gamma^*$ denote the universal tree on which $G$ acts edge-transitively.  We identify $L$ and $R$ with their images in $G$, and label an edge $\{\alpha, \beta\}$ so that $G_\alpha = L$, $G_\beta =R$, and $G_{\alpha \beta} =L\cap R$.  Since $\Gamma$ is locally $s$-arc-transitive for $s\geqslant 2$, it is locally 2-arc-transitive and so the actions of $L$ on the set of right cosets of $L\cap R$ in $L$, and of $R$ on the set of right cosets of $L\cap R$ in $R$ are 2-transitive \cite[Lemma 3.2]{localsarc}. In particular, $\Gamma$ is locally $(G,2)$-arc-transitive. 
 
 Now let $\Sigma$ be a graph with edge-transitive group of automorphisms $H$ such that the amalgam $(H_\gamma, H_\delta, H_{\gamma \delta})$ is isomorphic to $(L,R,L\cap R)$, where $\{\gamma, \delta\}$ is an edge of $\Sigma$.  By the universal property of $G$ and of $\Gamma^*$, there is a map $\phi: G \to H$ such that the appropriate diagram commutes.  Let $N$ be the kernel of $\phi$.  Then, $\Sigma = \Gamma^*_N$, the quotient graph, and the kernel of the action of $G$ on $\Sigma$ is exactly $N$.
 
 In particular, $\phi(G_\alpha) = H_\gamma$ and $\phi(G_\beta) = H_\delta$.  Further, since $\phi(G_{\alpha \beta}) = H_{\gamma \delta}$, we have commutative diagrams of the following groups:
 
 \begin{center}

	\begin{tikzcd}
G_\alpha \arrow[r] \arrow[d]
& G_\alpha^{\Gamma^*(\alpha)} \arrow[d] 
& G_\beta \arrow[r] \arrow[d]
& G_\beta^{\Gamma^*(\beta)} \arrow[d]\\
H_\gamma \arrow[r]
&  H_\gamma^{\Sigma(\gamma)}
& H_\delta \arrow[r]
& H_\delta^{\Sigma(\delta)}
\end{tikzcd},\\
\end{center}
where $G_\alpha^{\Gamma^*(\alpha)}$ denotes the induced action of $G_\alpha$ on $\Gamma^*(\alpha)$, etc.

We now claim that for $\ep = \gamma, \delta$ and $\zeta \in \Gamma^*(\ep)$, we have $\zeta^N \cap \Gamma^*(\ep) = \{\zeta\}$.  Indeed, this follows since $|G_\alpha:G_{\alpha \beta}| = |H_\gamma:H_{\gamma \delta}|$ and $|G_\beta:G_{\alpha \beta}| = |H_\delta:H_{\gamma \delta}|$.

Now suppose $\Gamma^*$ is locally $(G,r)$-arc-transitive and $\Sigma$ is locally $(H,t)$-arc-transitive.  By \cite[Lemma 5.1(3)]{localsarc}, we have $t \geqslant r$.  

Assume that $r< t$.  We will show that $\Gamma^*$ would be locally $(G, r+1)$-arc-transitive in this case, contradicting the maximality of $r$.

Suppose $P$ and $P'$ are $(r+1)$-paths in $\Gamma^*$ with initial vertex $\alpha$ or $\beta$.  Since $s \geqslant1$, without loss of generality we may assume $P = (\alpha, \beta_1, \dots, \beta_r, \beta_{r+1})$ and $P' = (\alpha, \beta_1, \dots, \beta_r, \beta_{r+1}')$, where $\beta_1 = \beta$.

Consider the images of $P^N$ and $(P')^N$ in $\Sigma$.  Note that the images are two $(s+1)$-paths, since the equality $\beta_{i-1}^N = \beta_{i+i}^N$ would contradict our claim above.  Hence, there is $h \in H_\gamma$ such that $(P^N)^h = (P')^N$.  Since $\phi(G_\alpha) = H_\gamma$, we can take $h = \phi(g)$ for $g \in G_\alpha$, so $g$ fixes $\alpha$.  Now, $(P^N)^h = (P')^N$ implies $(\beta^N)^g = \beta^N$.  Thus, $g$ fixes $\beta^N$, and, since $g$ fixes $\alpha$, $g$ fixes the unique vertex in $\Gamma^*(\alpha) \cap \beta^N$, which is $\beta$; so, $g \in G_{\alpha \beta}$.  Continuing in this way, we see that $g \in G_{\alpha \beta_1 \dots \beta_r}$.  Now, $(\beta_{r+1}^N)^h = (\beta_{r+1}')^N$, and so $\beta_{r+1}^g$ lies in the $N$-orbit of $\beta_{r+1}'$, and at the same time must be adjacent to $\beta_r$, since $g \in G_{\beta_r}$.  Once more, the claim implies $\beta_{r+1}^g = \beta_{r+1}'$.

We have thus shown that $G_\alpha$ is transitive on $(s+1)$-arcs with initial vertex $u$.  A similar argument establishes that same result for $G_\beta$, and hence $\Gamma^*$ is locally $(G, r+1)$-arc-transitive.  This contradicts the maximality of $r$, and, therefore, $r= t$, as desired. In particular, taking $\Sigma=\Gamma$ we see that $r=s$. Hence $\Gamma^*$, and so any graph with amalgam $(L,R,L\cap R)$, is locally $s$-arc-transitive.
\end{proof}

Lemma \ref{lem:cosets} enables us to construct locally $(G,2)$-arc-transitive graphs where $G$ has two orbits $\Delta_1$ and $\Delta_2$ on vertices and acts quasiprimitively of type PA on each.  Recall the three types straight diagonal, twisted diagonal and nondiagonal of quasiprimitive groups of type PA. Analogously to \cite{prodaction}, we refer to a locally $(G,2)$-arc-transitive graph $\Gamma$ where $G$ is quasiprimitive of type PA on each orbit by the type of the two PA actions. For example, if $G$ is of straight diagonal type on $\Delta_1$ and twisted diagonal type on $\Delta_2$ then we refer to $\Gamma$ as being of \emph{straight-twisted type}.

\subsection{Straight-twisted type}

\begin{con}\label{con:strtwi}
We begin with the following: let $(L,R, L \cap R)$ be an amalgam for a locally $s$-arc-transitive graph, and suppose further that $L=L_1\rtimes K$ and $R=R_1\rtimes K$ such that  $K$ acts trivially on $R_1$.

Let $H$ be an almost simple group with socle $T$, and subgroups $H_1$ and $H_2$ such that 
\begin{itemize}
 \item $H_1 \cong L_1$, $H_2 \cong R_1$, $H_1 \cap H_2 \cong L_1 \cap R_1$,
 \item $H = \langle H_1, H_2 \rangle$, and 
 \item not all automorphisms of $L_1$ in $K$ extend to automorphisms of $T$.
\end{itemize}

We will abuse notation slightly and assume $L_1,R_1 \leqslant H.$  Let $k=|K|$ and let $F=\{f:K\rightarrow H\}\cong H^k$.
For each $\ell\in L_1$ and $r\in R_1$, define $f_{\ell},f_r\in F$ such that $f_{\ell}(\kappa)=\ell^\kappa$ and $f_r(\kappa)=r$ for all $\kappa\in K$. Furthermore, we let $N_{\alpha}:=\{f_{\ell}\mid \ell \in L_1\}\cong L_1$ and $N_{\beta}=\{f_r\mid r\in R_1\}\cong R_1$. Since $K$ acts trivially on $R_1$, we have that $N_{\alpha}\cap N_{\beta}=\{f_r\mid r\in R_1\cap L_1\}\cong L_1\cap R_1.$ Let $N:=\langle N_\alpha,N_{\beta}\rangle$.

Now $K$ acts on $F$ via $f^{\sigma}(\kappa)=f(\sigma\kappa)$ for each $\sigma,\kappa\in K$. Then for $\ell\in L_1$ we have that $(f_{\ell})^\sigma (\kappa)= f_\ell(\sigma\kappa)=\ell^{\sigma\kappa}=f_{\ell^\sigma}(\kappa)$. Hence $(f_{\ell})^\sigma=f_{\ell^\sigma}$ and so $K$ normalises $N_{\alpha}$. Similarly, $(f_r)^{\sigma}=f_r$ for all $r\in R_1$ so $K$ normalises $N_{\beta}$ and hence also $N$. Define $G_{\alpha}:= N_\alpha\rtimes K$, $G_{\beta}:= N_\beta\rtimes K$ and $G:= \langle G_{\alpha}, G_{\beta} \rangle.$ Let $\Gamma=\Cos(G,G_\alpha,G_\beta)$.
\end{con}

\begin{lem}
\label{lem:straighttwisted}
Let $\Gamma$ be a graph yielded by  Construction \ref{con:strtwi}. Then $\Gamma$ is  a connected locally $(G,s)$-arc-transitive graph  such that $G$ acts quasiprimitively with type $\PA$ on each orbit of vertices.  Moreover, the action of $G$ on $[G:G_{\beta}]$ is straight diagonal, and the action of $G$ on $[G:G_{\alpha}]$ is twisted diagonal, that is, $\Gamma$ is of straight-twisted type.  
\end{lem}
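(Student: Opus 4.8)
The plan is to check, in order, that $\Gamma$ is connected, that it is locally $(G,s)$-arc-transitive, that $G$ is quasiprimitive of type $\PA$ on each of its two orbits, and that the two $\PA$ actions are straight diagonal and twisted diagonal respectively; the only substantial point is the identification of $\soc(G)$. Connectedness is immediate from Lemma~\ref{lem:cosets}(1), since $G=\langle G_\alpha,G_\beta\rangle$ by construction. For local $s$-arc-transitivity I would show that $\Gamma$ has amalgam $(L,R,L\cap R)$ and then invoke Lemma~\ref{lem:Luke}. Working inside $F\rtimes K$, with $K$ acting regularly on the $k$ coordinates of $F\cong H^k$: the map $f_\ell\mapsto\ell$ is a $K$-equivariant isomorphism $N_\alpha\to L_1$ (by the identity $(f_\ell)^\sigma=f_{\ell^\sigma}$), so $G_\alpha=N_\alpha\rtimes K\cong L$; likewise $f_r\mapsto r$ gives $G_\beta\cong R$, and $G_\alpha\cap G_\beta=(N_\alpha\cap N_\beta)\rtimes K\cong L\cap R$, compatibly with the inclusions. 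As $(L,R,L\cap R)$ is by hypothesis the amalgam of a locally $s$-arc-transitive graph, Lemma~\ref{lem:Luke} yields that $\Gamma$ is locally $(G,s)$-arc-transitive (with $G$ faithful on $V\Gamma$ once $\soc(G)$ is known to be transitive on each orbit, as below).

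Next I would pin down the socle. One has $G=N\rtimes K\leqslant H\Wr S_k$, and since each automorphism $\phi_\kappa\in\Aut(L_1)$ fixes $L_1$ setwise, $N=\langle N_\alpha,N_\beta\rangle$ projects onto $\langle L_1,R_1\rangle=H$ in every coordinate; so $N$ is a subdirect product of $H^k$, and one checks that necessarily $L_1,R_1\leqslant T$, whence $H=T$ and $N\leqslant T^k$. By the structure theorem for subdirect products of non-abelian simple groups, $N$ is a direct product of full diagonal subgroups over a $K$-invariant partition of the $k$ coordinates. Since the projection of $N$ to any pair of coordinates is generated by the corresponding projections of $N_\alpha$ and $N_\beta$, a Goursat computation shows that coordinates $\kappa,\kappa'$ are amalgamated in $N$ precisely when the map that is $\phi_\kappa^{-1}\phi_{\kappa'}$ on $L_1$ and the identity on $R_1$ extends to an element of $\Aut(T)$. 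This relation is $K$-invariant, so its classes are the cosets of a subgroup $U\leqslant K$; the hypothesis that some $\phi_{\kappa_0}$ does not extend to $\Aut(T)$ forces $U\neq K$. Hence $\soc(G)=N\cong T^{k'}$ with $k'=|K:U|\geqslant2$, it is the unique minimal normal subgroup of $G$ with $C_G(N)=1$, and $G/N\cong K$ permutes the $k'$ simple factors transitively.

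For the type: from $G=NK$, $G_\alpha=N_\alpha K$ and $G_\beta=N_\beta K$ one gets $G=\soc(G)\,G_\alpha=\soc(G)\,G_\beta$, so $\soc(G)$ is transitive on each orbit and $G$ is quasiprimitive on both. Taking $\Delta_1=[T:L_1]$, $\Delta_2=[T:R_1]$, and $\mathcal{B}_i$ the $G$-invariant partition with block stabiliser $L_1^{k'}\rtimes K$ and $R_1^{k'}\rtimes K$ respectively (these lie in $G$ because the diagonal twists preserve $L_1$ and $R_1$), one verifies that $G$ acts on $\mathcal{B}_i\cong\Delta_i^{k'}$ in product action, and faithfully since $\soc(G)=T^{k'}$ is not contained in the block stabiliser (as $T\not\leqslant L_1,R_1$); thus $G$ has type $\PA$ on both orbits. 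The stabiliser of $\soc(G)$ on $[G:G_\beta]$ is $N_\beta$, which under $\soc(G)\cong T^{k'}$ is the straight diagonal $\{(r,\dots,r):r\in R_1\}$, the functions $f_r$ being constant, so that action is straight diagonal. The stabiliser of $\soc(G)$ on $[G:G_\alpha]$ is $N_\alpha$, which becomes the twisted diagonal $\{(\ell,\ell^{\phi_{\sigma_2}},\dots,\ell^{\phi_{\sigma_{k'}}}):\ell\in L_1\}$ for coset representatives $1_K=\sigma_1,\dots,\sigma_{k'}$ of $U$ in $K$; this diagonal is genuinely twisted, since if all $\phi_{\sigma_j}$ were trivial then, writing $\kappa_0=\sigma_{j_0}u$ with $u\in U$, we would get $\phi_{\kappa_0}=\phi_u$, which extends to $\Aut(T)$ by definition of $U$, contradicting the choice of $\kappa_0$. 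Therefore the action on $[G:G_\alpha]$ is twisted diagonal and $\Gamma$ is of straight-twisted type.

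The main obstacle will be the determination of $\soc(G)$: one must correctly set up the structure theorem for subdirect products of simple groups and establish the precise ``amalgamated if and only if extends'' dichotomy, so that the hypothesis on the $\phi_\kappa$ translates exactly into $U\neq K$. Alongside this, verifying the quasiprimitivity condition $C_G(\soc(G))=1$ and the several faithfulness statements takes some care and relies on non-degeneracy hypotheses that are implicit in Construction~\ref{con:strtwi}, for instance that $K$ acts faithfully on $L_1$ and that $L_1$ and $R_1$ are core-free in $T$.
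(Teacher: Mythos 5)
Your overall route is the same as the paper's: identify the amalgam of $\Cos(G,G_\alpha,G_\beta)$ with $(L,R,L\cap R)$ and invoke Lemma~\ref{lem:Luke} for local $s$-arc-transitivity, then use the structure theorem for subdirect products of powers of a nonabelian simple group (Scott's lemma) to show the socle is $T^j$ with the coordinates amalgamated into classes, and finally use the hypothesis that not all automorphisms in $K$ extend to $\Aut(T)$ to force at least two classes, i.e.\ $j\geqslant 2$. Your elaborations --- the subgroup $U\leqslant K$ indexing the amalgamation classes, the explicit verification that $N_\beta$ becomes a straight diagonal and $N_\alpha$ a genuinely twisted one, and the checks of quasiprimitivity --- are more detailed than the paper's proof but consistent with it.

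There is, however, one step that does not hold in the stated generality of Construction~\ref{con:strtwi}: your assertion that ``one checks that necessarily $L_1,R_1\leqslant T$, whence $H=T$ and $N\leqslant T^k$.'' The construction only requires $H=\langle H_1,H_2\rangle$ to be almost simple with socle $T$; nothing forces $L_1$ and $R_1$ to lie inside $T$, and if $H\neq T$ then $N$ is a subdirect product of $H^k$ with $H$ not simple, so the structure theorem you quote does not apply to $N$ directly. The paper sidesteps this by setting $F_T=\{f\in F\mid f(\kappa)\in T\ \text{for all}\ \kappa\}\cong T^k$ and applying Scott's lemma to $N\cap F_T$ (which is legitimate because $\pi_\kappa(N)=H$ for every $\kappa$), concluding $N\cap F_T\cong T^j$ with $2\leqslant j\leqslant k$ and hence $\soc(G)\cong T^j$. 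Your argument goes through verbatim once you replace $N\leqslant T^k$ by this intersection; as written, though, the claim $H=T$ is unjustified and would fail for instances of the construction in which some generator of $L_1$ or $R_1$ lies in $H\setminus T$.
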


\begin{proof}
Let $F_T=\{f\in F\mid f(\kappa)\in T \textrm{ for all } \kappa\in K\}\cong T^k$. For each $\kappa\in K$, let 
$$\begin{array}{llll}
   \pi_{\kappa}: &F &\rightarrow & H \\
                 &f &\mapsto &f(\kappa)   
  \end{array}$$
Since $\langle R_1,L_1\rangle=H$, we have that $\pi_{\kappa}(N)=H$ for all $\kappa\in K$ and so by \cite[p. 328, Lemma]{scott},
$N\cap F_T$ is a direct product of diagonal subgroups, each isomorphic to $T$. Since not every $\kappa$ extends to an automorphism of $T$ it follows that $N\cap F_T$ is not itself a diagonal subgroup and so $N \cap F_T\cong T^j$ for some integer $2 \leqslant j \leqslant k$. 

Since the action of $K$ on $N_{\alpha}$ is isomorphic to the action of $K$ on $L_1$ we see that $G_{\alpha}\cong L$ and similarly, $G_{\beta}\cong R$.  Moreover, $G_{\alpha} \cap G_{\beta} \cong \langle L_1 \cap R_1, K \rangle = L \cap R.$  Therefore $\Gamma:= \Cos(G, G_{\alpha}, G_{\beta})$ is a connected graph with amalgam $(L,R, L \cap R)$ and is thus a locally $s$-arc-transitive graph.

Finally, since $K$ transitively permutes the simple direct factors of $F_T$ it also transitively permutes the simple direct factors of $N\cap F_T$. Thus $\soc(G) \cong T^j$ and $G \liso H \Wr S_j$ for some integer $j \geqslant 2.$  Since $\pi_{\kappa}(N_{\alpha})=L_1$ for all $\kappa\in K$ it follows that $N_{\alpha}$ is a subdirect subgroup of $L_1^j$ and similarly, $N_{\beta}$ is a subdirect subgroup of $R_1^j.$  Therefore, $G$ acts quasiprimitively with type $\PA$ on both $[G:G_{\alpha}]$ and $[G:G_{\beta}]$, and, by construction, the action of $G$ on $[G:G_{\beta}]$ is straight diagonal, and the action of $G$ on $[G:G_{\alpha}]$ is twisted diagonal.
\end{proof}

\begin{ex}
This example is based on \cite[Example 4.1]{prodaction}.  First, $(\AGL(1,5) \times C_2, S_3 \times C_4, C_4 \times C_2)$ is an amalgam admitting a locally 2-arc-transitive connected graph of valency $\{3,5\}$: indeed, a \textsc{Gap} computation shows that  in the group $S_7$ we can take $L=\langle (4,5,6,7), (3,4,5,7,6), (1,2) \rangle \cong \AGL(1,5) \times C_2$ and $R= \langle (1,2), (2,5), (1,2,5) \rangle \cong S_3 \times C_4$ such that  $\langle L,R \rangle = S_7$, and $L\cap R \cong C_4 \times C_2$ \cite{GAP4}.

Let $T = \PSL(2,p)$, where $p$ is a prime and $p \equiv \pm 1 \pmod {60}$.  Thus we may select $H < T$ such that $H \cong D_{60}$, with $H = \langle h,d \mid h^{30} = d^2 = 1, h^d = h^{-1} \rangle.$  First, define $L_1:= \langle h^3 \rangle \cong C_{10} \cong C_5 \times C_2.$  Noting that $H$ has a subgroup $B := \langle h^{15}, d \rangle \cong C_2^2$, there exists an element $x$ of $T$ such that $B^x = B$ and $d^x = h^{15}$ \cite{dickson}.  Define $R_1:= \langle (h^{10})^x, d^x \rangle$ to be a subgroup of $H^x$ isomorphic to $S_3$.  Hence $\langle L_1, R_1 \rangle = T$ and $L_1 \cap R_1 = C_2$  Finally, the order four elements of $\AGL(1,5)$ cannot be extended to automorphisms of $T$ since $\Aut(T) = \PGL(2,p)$ has no elements of order four normalising but not centralising a subgroup of order five.  Thus we let $K = \langle k \rangle \cong C_4$ and $L=L_1\rtimes K$. Note, as in \cite[Example 4.1]{prodaction}, that the action of $k^2$ on elements of $T$ is the same as conjugation by $d$.  Therefore, by Lemma \ref{lem:straighttwisted}, there is a locally $2$-arc-transitive graph with amalgam $(\AGL(1,5) \times C_2, S_3 \times C_4, C_4 \times C_2)$ of straight-twisted type.  
\end{ex}

\begin{thm}
There is an infinite family of locally 5-arc-transitive graphs with valencies $\{4,5\}$ of straight-twisted type.
\end{thm}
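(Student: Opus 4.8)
The plan is to apply Construction~\ref{con:strtwi} with a carefully chosen local amalgam $(L,R,L\cap R)$ realising valency $\{4,5\}$ together with an almost simple group $T$ in which the relevant automorphisms fail to extend. First I would locate a known infinite family of locally $5$-arc-transitive graphs of valency $\{4,5\}$: the natural candidates are the ones associated with the amalgam coming from the generalised quadrangles / the $\mathrm{Sp}_4$-type or $S_5$-type amalgams appearing in the Weiss--Trofimov classification, since the constraint $s=5$ with valency $\{4,5\}$ is very restrictive. Concretely I would seek $L$ with $L^{\Gamma(\alpha)}$ a $2$-transitive group of degree $4$ (so $L^{\Gamma(\alpha)}\cong S_4$ or $A_4$) and $R$ with $R^{\Gamma(\beta)}$ a $2$-transitive group of degree $5$ (so $\cong S_5$, $A_5$, or $\mathrm{AGL}(1,5)$, $C_5\rtimes C_4$), and I would want the decompositions $L=L_1\rtimes K$, $R=R_1\rtimes K$ with a common complement $K$ acting trivially on $R_1$. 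The key observation to exploit is that for the classical valency-$\{4,5\}$ locally $5$-arc-transitive amalgam one can take $K\cong C_2$ or a larger group whose action on $L_1$ includes automorphisms not induced by $\mathrm{Aut}(T)$.

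Next I would verify the three bulleted hypotheses of Construction~\ref{con:strtwi} for a suitable $T$. Mirroring the preceding Example, I would take $T=\mathrm{PSL}(2,p)$ for primes $p$ in a fixed congruence class, choose subgroups $H_1\cong L_1$ and $H_2\cong R_1$ of (conjugates of) a dihedral or $A_5$-type subgroup $H$ of $T$ with $H_1\cap H_2\cong L_1\cap R_1$ and $\langle H_1,H_2\rangle=H$, and then argue that not every automorphism of $L_1$ induced by $K$ extends to an automorphism of $T$ --- using, as in the Example, that $\mathrm{Aut}(\mathrm{PSL}(2,p))=\mathrm{PGL}(2,p)$ has restricted normaliser structure (e.g.\ no element of the required order normalising but not centralising a cyclic subgroup of the relevant order). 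Producing $H_1,H_2$ inside $T$ with the prescribed intersection and generating $T$ is a Dickson-subgroup bookkeeping exercise, and the congruence condition on $p$ (something like $p\equiv\pm1\pmod m$ for the appropriate $m$) guarantees the needed subgroups exist; letting $p$ range over that class gives the infinite family.

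Finally, Lemma~\ref{lem:straighttwisted} immediately yields that each resulting $\Gamma=\Cos(G,G_\alpha,G_\beta)$ is a connected locally $(G,s)$-arc-transitive graph of straight-twisted $\mathrm{PA}$ type, and Lemma~\ref{lem:Luke} together with the fact that the input amalgam admits a locally $5$-arc-transitive graph forces $s=5$; the valencies are $\{|L:L\cap R|,|R:L\cap R|\}=\{4,5\}$ by construction. So the remaining content is entirely the existence of the seed amalgam and the choice of $T$.

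The main obstacle I expect is the first step: exhibiting an explicit infinite-family-compatible amalgam $(L,R,L\cap R)$ of valency $\{4,5\}$ that is simultaneously (i) known to underlie a locally $5$-arc-transitive graph and (ii) of the special split form $L=L_1\rtimes K$, $R=R_1\rtimes K$ with $K$ acting trivially on $R_1$ and with some $K$-automorphism of $L_1$ not extending to $\mathrm{Aut}(T)$ for a workable family of $T$. The valency-$\{4,5\}$, $s=5$ amalgams are rigid and classified, so one must check against that list whether any admits the required common-complement decomposition; I anticipate this is where a short \textsf{GAP} computation (as in the Example) plus a hand argument about which groups $T$ can host $L_1$ and $R_1$ will be needed, and verifying the non-extendability condition for the chosen $T$ --- rather than any of the subsequent formal steps --- is the crux.
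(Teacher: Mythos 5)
Your overall strategy matches the paper's: find a seed amalgam of valency $\{4,5\}$ admitting a locally $5$-arc-transitive graph with the split structure $L=L_1\rtimes K$, $R=R_1\rtimes K$ ($K$ acting trivially on $R_1$), embed $L_1$ and $R_1$ into an infinite family of simple groups $T$ so that the three hypotheses of Construction~\ref{con:strtwi} hold, and then invoke Lemma~\ref{lem:straighttwisted} (with Lemma~\ref{lem:Luke} guaranteeing $s=5$). But you have explicitly deferred the entire substantive content of the proof --- the existence of the seed amalgam and of the hosts $T$ --- to an unresolved ``obstacle,'' so as written this is a plan rather than a proof. The paper resolves it as follows: the amalgam is the $\M_{24}$ amalgam from the sporadic classification, $L=C_2^4\rtimes(A_4\times C_3)$, $R=A_5\times A_4$, $L\cap R=A_4\times A_4$, with $L_1=C_2^4{:}C_3$, $R_1=A_5$ and $K=A_4$ (note $K$ is $A_4$, not the $C_2$ or $\AGL(1,5)$-flavoured complements you anticipate), and the hosts are $T=\PSL(2,2^{2n})$ for $n\geqslant 2$.

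Your concrete guesses for the second step would in fact fail for this amalgam: you propose $T=\PSL(2,p)$ for odd primes $p$ with dihedral or $A_5$-type subgroups, but $L_1\cong C_2^4{:}C_3$ contains an elementary abelian group of order $16$, whereas the Sylow $2$-subgroups of $\PSL(2,p)$ for odd $p$ are dihedral; so no such $T$ can contain $L_1$. One genuinely needs characteristic $2$, and even then the delicate points are (i) arranging $L_1\cap R_1\cong A_4$ and proving $\langle L_1,R_1\rangle=T$, which the paper does by a careful argument (via Dickson's theorems on subfield subgroups) showing $L_1$ lies in no proper subfield subgroup, and (ii) verifying that the $K$-automorphisms of $L_1$ do not all extend to $\Aut(T)$, which follows because $\Out(\PSL(2,2^{2n}))$ is cyclic so $\PGammaL(2,2^{2n})$ has no subgroup isomorphic to $L$. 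Neither of these is a routine bookkeeping exercise of the kind your sketch assumes, and the congruence-on-$p$ framing you borrow from the preceding Example does not transfer.
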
 

\begin{proof}
By \cite{localsporadic}, there is an amalgam admitting a locally 5-arc-transitive connected graph  of valency $\{4,5\}$ from the Mathieu group $\M_{24}$, with $L = C_2^4\rtimes(A_4 \times C_3),$ $R = A_5 \times A_4,$  and $L \cap R = A_4 \times A_4.$  Note that $L=L_1\rtimes K$ and $R=R_1\times K$ where $L_1=C_2^4\rtimes C_3$, $R_1=A_5$ and $K=A_4$.

Let $n\geqslant 2$ be an integer and $T=\PSL(2,2^{2n})$. Then $T$ contains a subgroup $R_1 \cong A_5 \cong \PSL(2,4)$ (see \cite{dickson}, for instance).  Furthermore, $T$ contains a subgroup $Y$ isomorphic to $C_2^{2n}\rtimes C_{2^{2n}-1}$, and $2^{2n}-1 \equiv 0 \pmod 3$.  Let $Y = Y_2\rtimes Y_1$, where $Y_2 \cong C_2^{2n}$ and $Y_1 = \langle y_1 \rangle \cong C_{2^{2n}-1}$.  Thus $Y_1$ has a cyclic group of order three, which we will denote by $Y_3 = \langle y_1^{(2^{2n}-1)/3} \rangle$, acting semiregularly on the nonidentity elements of $Y_2$.  Moreover, we may choose $R_1$ such that $Y_0 := R_1 \cap Y \cong A_4$ and $Y_3 \leqslant Y_0$.  By \cite[Theorem 260]{dickson}, we see that $N_T(Y_0) \leqslant Y$, and, noting that $Y_1$ acts regularly on the nonidentity elements of $Y_2$, we see that $N_T(Y_0) = Y_0$.  By \cite[Theorem 255]{dickson}, for each divisor $m$ of $2n$, all subfield subgroups of $T$ isomorphic to $\PSL(2,2^m)$ are conjugate. This implies that $Y_0$ is contained in a unique subfield subgroup $T_m$ isomorphic to $\PSL(2,2^{m})$ for each divisor $m$ of $2n$, $m$ even (if $m$ is odd, then $2^2-1 = 3$ does not divide $2^m-1$).  Note also that this implies that the maximal subgroup of $T_m$ isomorphic to $C_2^m\rtimes C_{2^m-1}$ is actually $T_m \cap Y$. We claim that no subfield subgroup $T_m$ containing $Y_0$, for $m$ a proper even divisor of $2n$, also contains $Y_0^{y_1}$.  If some $T_m$ contains $Y_0^{y_1}$, since the elements of order two in $Y_0$ and $Y_0^{y_1}$ commute and $Y_0 \cap Y_0^{y_1} = Y_3$ we have that $\langle Y_0, Y_0^{y_1}\rangle \leqslant T_m \cap Y  \cong C_2^m\rtimes C_{2^{m}-1}$ where $T_m \cap Y_1$ acts regularly on the nonidentity elements of $T_m \cap Y_2.$  However, $Y_1$ acts regularly on the nonidentity elements of $Y_2$, so $y_1$ is the unique element of $Y_1$ mapping, say, $y_2 \in Y_0 \cap Y_2$ to $y_2^{y_1} \in Y_0^{y_1} \cap Y_2$. On the other hand, $y_1 \not\in T_m \cap Y_1 = \langle y_1^{(2^{2n}-1)/(2^m - 1)}\rangle,$ so we have a contradiction. 

Let $L_1 := \langle Y_0, Y_0'^{y_1}\rangle$. Then $L_1 \cong 2^4{:}3$ (SmallGroup(48,50) in the \GAP \cite{GAP4} small groups library) which is isomorphic to the subgroup $L_1$ in $L$, hence the abuse of notation. Moreover, $L_1\cap R_1\cong A_4$ and, since $L_1$ is not contained in any subfield subgroup, we have that $T = \langle L_1, R_1 \rangle$.  
Since $\PGammaL(2,2^{2n})$ does not contain a subgroup isomorphic to $L$ (\cite[Theorem 260]{dickson} and noting that the outer automorphism group of $\PSL(2,2^n)$ is cyclic), it follows that not all automorphisms of $L_1$ in $L$ extend to automorphisms of $T$. Hence by Lemma \ref{lem:straighttwisted}, Construction \ref{con:strtwi} yields a locally 5-arc-transitive graph of straight-twisted type.
\end{proof}

\subsection{Twisted-twisted type}\label{sec:twtw}

If $G$ acts quasiprimitively of type straight PA type on a set $\Omega$,  then there exists $\alpha\in\Omega$ such that $N_\alpha=\{(r,r,\ldots,r)\mid r\in R\}$, where $N=T^k$ is the unique minimal normal subgroup of $G$. If $g=(t_1,t_2,\ldots,t_k)\in R^k\leqslant N$ then $N_{\alpha^g}=(N_\alpha)^g=\{(r^{t_1},r^{t_2},\ldots,r^{t_k})\mid r\in R\}$, which is a twisted diagonal subgroup if $t_i\notin C_T(R)$ for some $i$.  Thus the examples given in the previous section can also be viewed as being of twisted-twisted type.
However, if $G$ acts quasiprimitively of type twisted PA on a set $\Omega$ then $N_\alpha$ is a twisted diagonal subgroup of $R^k$ for some $R$ but there may not be a $\beta\in\Omega$ such that $N_\beta$ is a straight diagonal subgroup. Thus not all twisted-twisted type examples arise in this way.  In this section we give an alternative construction.

\begin{con}\label{con:twitwi}
Let $(L,R, L \cap R)$ be an amalgam for a locally $s$-arc-transitive graph, and suppose further that $L =L_1\rtimes K$ and $R =R_1\rtimes K$ such that $K=K_L\times K_R$ where $K_L\leqslant \Out(L_1)$, $K_L$ acts trivially on $R_1$, $K_R\leqslant \Out(R_1)$ and $K_R$ acts trivially on $L_1$.  Let $H$ be an almost simple group with socle $T$, and subgroups $H_1$ and $H_2$ such that 
\begin{itemize}
 \item $H_1 \cong L_1$, $H_2 \cong R_1$, $H_1 \cap H_2 \cong L_1 \cap R_1$,
 \item $H = \langle H_1, H_2 \rangle$, and 
 \item not all elements of $K$ extend to automorphisms of $T$.
\end{itemize}

We will abuse notation slightly and assume $L_1,R_1 \leqslant H.$  Let $k=|K|$ and let $F=\{f:K\rightarrow H\}\cong H^k$.
For each $\ell\in L_1\cup R_1$, define $f_{\ell}\in F$ such that $f_{\ell}(\kappa)=\ell^\kappa$  for all $\kappa\in K$. Furthermore, we let $N_{\alpha}:=\{f_{\ell}\mid \ell \in L_1\}\cong L_1$ and $N_{\beta}=\{f_r\mid r\in R_1\}\cong R_1$. Moreover, $N_{\alpha}\cap N_{\beta}=\{f_r\mid r\in R_1\cap L_1\}\cong L_1\cap R_1.$ Let $N:=\langle N_\alpha,N_{\beta}\rangle$.

Now $K$ acts on $F$ via $f^{\sigma}(\kappa)=f(\sigma\kappa)$ for each $\sigma,\kappa\in K$. As in Construction \ref{con:strtwi}, $K$ normalises both $N_{\alpha}$ and $N_{\beta}$, and hence also $N$.
 Define $G_{\alpha}:= N_{\alpha},\rtimes K$, $G_{\beta}:=  N_{\beta}\rtimes  K $ and $G:= \langle G_{\alpha}, G_{\beta} \rangle.$ Let $\Gamma=\Cos(G,G_\alpha,G_\beta)$.
\end{con}

\begin{lem}
\label{lem:twistedtwisted}
Let $\Gamma$ be a graph yielded by  Construction \ref{con:twitwi}. Then $\Gamma$ is  a connected locally $(G,s)$-arc-transitive graph  such that $G$ acts quasiprimitively with type $\PA$ on each orbit on vertices.  Moreover, the action of $G$ on both $[G:G_{\alpha}]$ and $[G:G_{\beta}]$ is twisted diagonal, that is, $\Gamma$ is of twisted-twisted type.  
\end{lem}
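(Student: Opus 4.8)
The plan is to follow the template already set by the proof of Lemma~\ref{lem:straighttwisted}, since Construction~\ref{con:twitwi} is structurally parallel: the only genuine differences are that $K$ now splits as $K_L\times K_R$ and that $K_L$ acts nontrivially on $L_1$ while $K_R$ acts nontrivially on $R_1$ (so neither $N_\alpha$ nor $N_\beta$ is a straight diagonal subgroup). First I would introduce $F_T=\{f\in F\mid f(\kappa)\in T\text{ for all }\kappa\in K\}\cong T^k$ and, for each $\kappa\in K$, the projection $\pi_\kappa\colon F\to H$, $f\mapsto f(\kappa)$. Since $H=\langle L_1,R_1\rangle$ we get $\pi_\kappa(N)=H$ for every $\kappa$, so Scott's Lemma (\cite[p.~328]{scott}) applies and shows $N\cap F_T$ is a direct product of full diagonal subgroups of $T^k$, each isomorphic to $T$. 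The hypothesis that not all elements of $K$ extend to automorphisms of $T$ guarantees $N\cap F_T$ is not itself a single diagonal subgroup, hence $N\cap F_T\cong T^j$ for some $2\leqslant j\leqslant k$.

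Next I would identify the amalgam. Because the action of $K$ on $N_\alpha$ is equivalent (via $f_\ell\mapsto\ell$) to the action of $K$ on $L_1$ — here one uses that $K_R$ acts trivially on $L_1$ so the whole of $K=K_L\times K_R$ acts on $L_1$ through $K_L\leqslant\Out(L_1)$, matching the semidirect product $L=L_1\rtimes K$ — we obtain $G_\alpha\cong L$; symmetrically, using that $K_L$ acts trivially on $R_1$, $G_\beta\cong R$. For the intersection: $N_\alpha\cap N_\beta=\{f_r\mid r\in L_1\cap R_1\}\cong L_1\cap R_1$ and $K$ normalises both, so $G_\alpha\cap G_\beta\cong\langle L_1\cap R_1,K\rangle=L\cap R$. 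Thus $(G_\alpha,G_\beta,G_\alpha\cap G_\beta)\cong(L,R,L\cap R)$, and $G=\langle G_\alpha,G_\beta\rangle$ by definition, so $\Gamma=\Cos(G,G_\alpha,G_\beta)$ is connected by Lemma~\ref{lem:cosets}(1); being a graph with the same amalgam as a locally $s$-arc-transitive graph, Lemma~\ref{lem:Luke} makes $\Gamma$ itself locally $(G,s)$-arc-transitive.

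Finally I would verify the quasiprimitive PA structure. Since $K$ acts on $F_T$ permuting its $k$ simple direct factors transitively (the action of $K$ on itself by left multiplication is transitive), it also permutes transitively the $j$ simple direct factors of $N\cap F_T$, so $N\cap F_T$ is the unique minimal normal subgroup of $G$, $\soc(G)\cong T^j$, and $G\liso H\Wr S_j$ with $j\geqslant 2$. Because $\pi_\kappa(N_\alpha)=L_1$ for all $\kappa$, the image of $N_\alpha$ in $N\cap F_T\cong T^j$ is a subdirect subgroup of $L_1^{\,j}$, so $(N\cap F_T)_{\alpha}$ (equivalently the stabiliser in the socle) is a subdirect product inside $(L_1\cap T)^j$; the same holds for $N_\beta$ and $R_1^{\,j}$. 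Hence $G$ is quasiprimitive of type PA on each of $[G:G_\alpha]$ and $[G:G_\beta]$. That the type is twisted diagonal on \emph{both} sides is where care is needed and I expect it to be the main obstacle: I must show that the diagonal automorphisms $\varphi_i$ appearing in the description of $N_\alpha$ as a diagonal subgroup of $T^j$ genuinely include a nontrivial one, and likewise for $N_\beta$. This follows because the $\varphi_i$ are induced by the $K_L$-action on $L_1$ (resp.\ $K_R$-action on $R_1$) passed through the identification with distinct coordinates, together with the running hypothesis that not all these elements of $K$ extend to automorphisms of $T$ — so at least one coordinate of $N_\alpha$ (and of $N_\beta$) is twisted. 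One should double-check that the coordinate identifications do not inadvertently conjugate all twists away; this is the one point where the $K=K_L\times K_R$ decomposition is essential, as it keeps the $L_1$- and $R_1$-twisting independent.
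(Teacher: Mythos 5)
Your proposal is correct and follows exactly the route the paper intends: the paper's own proof of this lemma is the single line ``Proof is analogous to that of Lemma~\ref{lem:straighttwisted},'' and your argument is precisely that analogue carried out in detail (Scott's Lemma applied to $N\cap F_T$, identification of the amalgam $(G_\alpha,G_\beta,G_\alpha\cap G_\beta)\cong(L,R,L\cap R)$, Lemma~\ref{lem:Luke} for local $s$-arc-transitivity, and the subdirect-subgroup analysis for the PA type). Your extra care about why \emph{both} sides are genuinely twisted — using that $K_L$ and $K_R$ act nontrivially on $L_1$ and $R_1$ respectively and that the coordinate identifications cannot remove the twist — is a worthwhile elaboration of a point the paper leaves implicit, but it is not a departure from the paper's approach.
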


\begin{proof}
Proof is analogous to that of Lemma \ref{lem:straighttwisted}.
\end{proof}

\begin{ex}
First, $(C_{71}{:}C_{70} \times C_9, C_{19}{:}C_{18} \times C_{35}, C_{630})$ is an amalgam that admits a locally 2-arc-transitive graph; indeed, if $G = A_{89}$,
\begin{align*}
 L := \langle &(1,2,8,28,14,30,34,3,20,54,36,33,40,41,9,56,26,51,60,18,42,29,39,17,46,58,\\
 &47,10,15,70,62,13,32,59,57,31,66,22,24,67,48,27,35,50,45,12,23,11,52,4,64,\\
    &7,53,25,16,61,21,44,6,5,68,71,19,55,38,69,65,49,63,43,37),\\ &(2,3,4,\dots,71)(72,73,\dots,89)\rangle,
\end{align*}
and
\begin{align*}
 R:= \langle &(1,72,73,85,74,88,86,78,75,80,89,84,87,77,79,83,76,82,81),\\
 &(2,3,4,\dots,71)(72,73,\dots,89)\rangle,
 \end{align*}
then, using $\GAP$, we see that $L \cong C_{71}{:}C_{70} \times C_9$, $R \cong C_{19}{:}C_{18} \times C_{35}$, $L \cap R \cong C_{630}$, $\langle L, R \rangle = G$, and by Lemma \ref{lem:cosets}, the coset graph $\Cos(G,L,R)$ is a connected locally $(G,2)$-arc-transitive graph.

Let $T = \mathbb{M},$ the Monster Group.  By \cite{primederange}, $T$ contains subgroups $L_1 \cong D_{142}$ and $R_1 \cong D_{38}$, and $L_1$ and $R_1$ may be selected such that $L_1 \cap R_1 \cong C_2$ (here, the element of order two is of type 2B). By \cite{Wilson} we see that $\mathbb{M}$ does not have a maximal subgroup of order divisible by 71 and 19. Thus $\langle L_1,R_1\rangle=T$. Let $K =C_{315} = C_{35} \times C_9,$ and since $T$ does not contain an element of order $315$ \cite{atlas}, not all elements of  $K$ lift to an automorphism of $T$. Therefore, by Lemma \ref{lem:twistedtwisted}, Construction \ref{con:twitwi} yields a locally 2-arc-transitive graph $\Gamma$ with amalgam $(C_{71}{:}C_{70} \times C_9, C_{19}{:}C_{18} \times C_{35}, C_{630})$  of twisted-twisted type with valencies $\{71, 19 \}.$
\end{ex}

\subsection{Straight-nondiagonal type}

We first include an example of an \textit{equidistant linear code} from \cite{prodaction}, which proves useful in later constructions.  A \textit{linear (n,k)-code} $C$ over $\GF(q)$ is a $k$-dimensional subspace of $\GF(q)^n$, a codeword has \textit{weight} $w$ if it has exactly $w$ nonzero coordinates, and a code $C$ is \textit{equidistant} if all nonzero codewords have the same weight.  

\begin{ex}\cite[Example 5.1]{prodaction}
\label{ex:code}
Let $V = \GF(3)^4$, and let
\[C = \langle (1,1,1,0), (1,2,0,1)\rangle < V. \]
Then, $C$ is a linear $(4,2)$-code, and it contains eight nonzero code words:
\[(1,1,1,0), (1,2,0,1), (2,0,1,1), (0,2,1,2), (2,2,2,0), (2,1,0,2), (1,0,2,2), (0,1,2,1), \]
and hence $C$ is equidistant of weight $3$.

Let $\tau = (\sigma, 1, \sigma, \sigma)(1,2,3,4) \in \GL(1,3) \Wr S_4 < \GL(V)$.  Then, $\tau^4 = (\sigma, \sigma, \sigma, \sigma)$, $|\tau| = 8$, and $\tau$ permutes the eight nonzero words of $C$ in the order given above.
\end{ex}

Our next result constructs examples of straight-nondiagonal type.

\begin{thm}\label{thm:strnon}
For each integer $n\geqslant 3$, there exists a locally 2-arc-transitive graph of straight-nondiagonal type with  valencies $\{n,9\}$.
\end{thm}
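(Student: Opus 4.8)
The plan is to build the graph as a coset graph $\Cos(G,G_\alpha,G_\beta)$ for a suitable quasiprimitive group $G$ of type PA, following the pattern of Constructions \ref{con:strtwi} and \ref{con:twitwi} but now arranging one side to be nondiagonal. First I would choose the ``small'' amalgam: I want an amalgam $(L,R,L\cap R)$ admitting a locally $2$-arc-transitive graph of valency $\{n,9\}$, with $L = L_1 \rtimes K$ and $R = R_1 \rtimes K$ where $K$ acts trivially on $R_1$, so that the $\Delta_2$-side will become straight diagonal, while on the $\Delta_1$-side I deliberately do \emph{not} take $N_\alpha$ to be a full subdirect (diagonal) subgroup of $L_1^j$ — instead I build $N_\alpha$ so that its projection to each simple factor is a proper subgroup, making the action nondiagonal. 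The valency $9 = 3^2$ and the appearance of $\GF(3)$ strongly suggest using the equidistant $(4,2)$-code $C$ of Example \ref{ex:code}: the element $\tau$ of order $8$ cyclically permuting the eight nonzero codewords is exactly the kind of ``twisting'' automorphism that lets a subgroup of $T^k$ project nondiagonally while $K$ still transitively permutes the $k$ simple direct factors. So I would take $T$ a simple group (likely $\PSL(2,q)$ again, for a family of $q$, to get the infinitely many graphs and the ``$n \geqslant 3$'' freedom in valency), with $H = D_{2m}$-type or affine-type subgroups $H_1, H_2$ inside $T$ playing the roles of $L_1$ and $R_1$, where $H_2$ has index giving valency $9$ and $H_1$ has index giving valency $n$.

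Next I would verify the three bullet-point hypotheses needed for the construction to go through: (i) $H_1 \cong L_1$, $H_2 \cong R_1$, $H_1 \cap H_2 \cong L_1 \cap R_1$, which is a direct containment/order check inside $T$ using Dickson's subgroup theory of $\PSL(2,q)$ \cite{dickson}; (ii) $\langle H_1, H_2\rangle = T$, which I would get by ensuring $H_1$ (or the pair) is not contained in any maximal subgroup of $T$ — typically by arranging that a suitable prime divides $|H_1 H_2|$ but no maximal subgroup has order divisible by it, or by the subfield-subgroup argument used in the straight-twisted proof; (iii) the non-extension condition — that not all the relevant automorphisms in $K$ lift to $\Aut(T) = \PGammaL(2,q)$ — which for $\PSL(2,q)$ follows from the outer automorphism group being ``too small'' (cyclic, generated by field automorphisms and a diagonal automorphism) to realise the order-$8$ twist coming from $\tau$, exactly as the order-four-element argument works in the straight-twisted example. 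Then Lemma \ref{lem:straighttwisted}'s argument (via Scott's lemma \cite{scott} and the subdirect-product / transitive-permutation-of-factors bookkeeping) gives that $\soc(G) = T^j$ with $j \geqslant 2$, $G \liso H \Wr S_j$, the action on $[G:G_\beta]$ is straight diagonal, and — by the nondiagonal choice of $N_\alpha$ — the action on $[G:G_\alpha]$ is nondiagonal; Lemma \ref{lem:Luke} then upgrades ``coset graph with amalgam $(L,R,L\cap R)$'' to ``locally $2$-arc-transitive''.

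Finally, to get the full range of valencies $\{n,9\}$ for every $n \geqslant 3$, I would either vary $H_1$ inside $T$ so that $|H_1 : H_1 \cap H_2|$ realises each target valency $n$, or vary $T$ itself through a family (different primes $q$) while keeping the code-based structure on the side of valency $9$; this is where I would lean on a flexible family like $\PSL(2,q)$ with $q$ chosen so that $T$ contains both the fixed ``valency-$9$'' subgroup and a subgroup of the needed index $n$. The main obstacle I anticipate is exactly this simultaneous control: finding, for \emph{every} $n\geqslant 3$, a single almost simple $H$ (with socle $T$) containing compatible $H_1, H_2$ that generate $T$, have the prescribed intersection, and have the prescribed indices $n$ and $9$, \emph{while} also failing the extension condition so that the diagonal collapses to $T^j$ with $j\geqslant 2$ rather than $j=1$ (which would give a primitive, not merely quasiprimitive, PA action and be ruled out by Theorem \ref{thm:PA1}). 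Checking $\langle H_1,H_2\rangle = T$ uniformly across the family, and checking that the nondiagonal projection really is proper on every factor (so the action genuinely is nondiagonal and not accidentally diagonal), are the delicate points; everything else is the same routine verification as in the straight-twisted and twisted-twisted lemmas.
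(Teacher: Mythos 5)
There is a genuine gap, and it is exactly the one you flag at the end as ``the main obstacle'': you never produce a group that realises the valency-$n$ side for \emph{every} $n\geqslant 3$. Your candidate $T=\PSL(2,q)$ cannot do this uniformly --- by Dickson's classification its subgroups are cyclic, dihedral, $A_4$, $S_4$, $A_5$, Borel-type or subfield subgroups, so there is no hope of a $2$-transitive local action of degree $n$ for arbitrary $n$ coming from a pair $H_1\geqslant H_1\cap H_2$ inside it. The paper's key move is to take $T=A_{n+2}$ with $H=S_{n+2}$, letting $L\cong S_2\times S_n$ and $R\cong S_3\times S_{n-1}$ be the setwise stabilisers of $\{1,2\}$ and $\{1,2,3\}$; then $L\cap R\cong S_2\times S_{n-1}$, the valency-$n$ local action is just $S_n$ on $n$ points, and the valency-$9$ local action is $\AGL(1,3^2)$ on $\GF(3^2)$, with local $2$-arc-transitivity checked directly from Lemma \ref{lem:cosets}(4) rather than imported via Lemma \ref{lem:Luke} from a pre-existing amalgam. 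You correctly identify the role of the code of Example \ref{ex:code} and of the order-$8$ element $\tau$, but without a family of socles that carries a degree-$n$ $2$-transitive local action for all $n$, the construction does not close.

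There is also a structural problem with routing the argument through Construction \ref{con:strtwi}: that construction always produces $N_\alpha\cong L_1$ and $N_\beta\cong R_1$ as (straight or twisted) \emph{diagonal} subgroups, so it can never yield a nondiagonal vertex stabiliser no matter how the hypotheses are tuned. The nondiagonality in the paper does not come from $\tau$ (which merely permutes the four factors, centralises $N_\alpha$ and normalises $N_\beta$, supplying the $C_8$ on top of $C_3^2$ to get $\AGL(1,9)$); it comes from taking the generators of $N_\beta$ patterned on the codewords, e.g.\ $(h,h,h,1)$ and $(h,h^{-1},1,h)$, which makes $N_\beta\cong(C_3^2{:}C_2)\times S_{n-1}$ a \emph{proper subdirect} product of $R^4$ that still projects onto $R$ in every coordinate but is not isomorphic to $R$. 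Relatedly, your gloss of ``nondiagonal'' as ``the projection to each simple factor is a proper subgroup'' is not the paper's definition: in the PA setting $N_\beta$ must be subdirect in $N_B=R^k$ (full projections), and nondiagonal means only that $N_\beta\not\cong R$. If the projections were proper subgroups of $R$ the quasiprimitive PA structure would be violated.
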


\begin{proof}
We adapt the construction of \cite[Lemma 5.2]{prodaction}.  Let $H = S_{n+2}$.  Then $H$ contains subgroups $L \cong S_2 \times S_{n}$ and $R \cong S_3 \times S_{n-1}$ such that $\langle L,R \rangle = H$ and $L \cap R \cong S_2 \times S_{n-1}$ (this is realized by letting $L$ be the stabilizer of $\{1,2 \}$ and letting $R$ be the stabilizer of $\{1,2,3\}$).  

Based on the equidistant linear code defined in Example \ref{ex:code}, we define $N_{\alpha}:= \langle (\ell,\ell,\ell,\ell) \mid \ell \in L \rangle$.  Moreover, if $R = R_1 \times R_2,$ where $R_1 \cong S_3$, $R_2 \cong S_{n-1}$, and $R_1 = \langle h, \sigma | h^3 = \sigma^2 = hh^{\sigma} = 1 \rangle,$ we define $N_{\beta}:= \langle (h,h,h, 1), (h,h^{-1},1,h), (x,x,x,x) | x \in \langle \sigma \rangle \times R_2 \rangle.$  By choosing $\sigma\in L$ we have $N_{\alpha} \cap N_{\beta} \cong S_2 \times S_{n-1}$, and, as in \cite[Lemma 5.2]{prodaction}, $N_{\beta} \cong (C_3^2{:}C_2) \times S_{n-1} \not\cong R$. Let $N:= \langle N_{\alpha},N_{\beta} \rangle$. Since $\langle L,R\rangle\cong S_{n+2}$ it follows that $N$ projects onto $S_{n+2}$ in each of its four coordinates. Moreover, given any two of the four coordinates, $N_{\beta}$ contains an element that is the identity in one coordinate and a nonidentity element of $A_{n+2}$ in another. Thus $A_{n+2}^4\norml N$.  Note that $N$ is not necessarily all of $S_{n+2}^4$; indeed, the elements of $N_\beta$ that do not have all entries equal  have even permutations as their entries.

Define $\tau:= (\sigma, 1, \sigma, \sigma)(1,2,3,4).$  Then $\tau^4 = (\sigma, \sigma, \sigma, \sigma)$ and so $\tau^8 = 1.$  Furthermore, $\tau$ centralizes $N_\alpha$ and normalises $N_\beta$.  Let $G_{\alpha}:= \langle N_{\alpha}, \tau \rangle,$ $G_{\beta}:= \langle N_{\beta}, \tau \rangle,$ and $G:= \langle G_{\alpha}, G_{\beta} \rangle.$  By similar reasoning as in \cite[Lemma 5.2]{prodaction}, $A_{n+2}^4 \liso G$  and $G$ induces $C_4$ on the 4 simple direct factors. Moreover, $G_{\beta} \cong \AGL(1,3^2) \times S_{n-1}$.  We also see that $G_{\alpha} \cong C_8 \times S_{n},$ and $G_{\alpha} \cap G_{\beta} \cong C_8 \times S_{n-1}$.  

Let $\Gamma:= \Cos(G,G_{\alpha},G_{\beta})$.  Since $G_{\beta}$ acts on  $[G_{\beta}{:}G_{\alpha} \cap G_{\beta}]$ as $S_{n}$ does on $n$ points and $G_{\alpha}$ acts on $[G_{\alpha}{:}G_{\alpha} \cap G_{\beta}]$ as $\AGL(1,3^2)$ does on $\GF(3^2)$, we see that $\Gamma$ is a connected locally 2-arc-transitive graph with valencies $\{n,9\}.$  Clearly, the action of $G$ on $[G{:}G_{\alpha}]$ is straight diagonal, and the action of $G$ on $[G{:}G_{\beta}]$ is nondiagonal (as in \cite[Lemma 5.2]{prodaction}).  Therefore, $\Gamma$ is a locally 2-arc-transitive graph of straight-nondiagonal type with vertex valencies $\{n,9\}$.
\end{proof}

\subsection{Twisted-nondiagonal type}
As discussed at the start of Section \ref{sec:twtw}, the straight-nondiagonal examples given by Theorem \ref{thm:strnon} can also be viewed as twisted-nondiagonal examples. We also have the following construction of a graph of twisted-nondiagonal type.

\begin{ex}
Let $T = \PSL(2,61).$  By \cite{dickson}, $T$ contains a maximal subgroup $M \cong D_{60}$.  Now, $M$ contains a subgroup $X$ isomorphic to $C_2^2$, and $N_T(X) \cong A_4$.  Now, $N_T(X)$ contains an element $g$ of order three that is not in $M$.  Thus we may select subgroups $L \leqslant M$ and $R \leqslant M^g$ such that $L \cong C_{10} \cong C_5 \times C_2,$ $R \cong C_3{:}C_2,$ $\langle L,R \rangle = T$ and $L \cap R = X \cong C_2.$  Note that we may select presentations $L = \langle l,x | l^5 = x^2 = 1\rangle$ and $R = \langle r,x | r^3 = x^2 = rr^x = 1 \rangle$. 

Note that $L$ has an isomorphism $\phi$ defined by $\phi: l \mapsto l^2, x \mapsto x$.  We define $\overline{l}:= (l, l^{\phi}, l^{\phi^2}, l^{\phi^3}) = (l, l^2, l^4, l^3)$ and $\overline{x} := (x,x,x,x)$.  Furthermore, we define $N_{\alpha}:= \langle \overline{l}, \overline{x}\rangle$, $N_{\beta}:= \langle (r,r,r,1), (r,r^{-1},1,r), \overline{x} \rangle,$ and $N:= \langle N_{\alpha}, N_{\beta} \rangle.$  As in \cite[Lemma 5.2]{prodaction}, none of the coordinates of $N_{\beta}$ can be linked, so $N \cong T^4.$  Moreover, $N_{\alpha} \cong L \cong C_5 \times C_2,$ $N_{\beta} \cong C_3^2{:}C_2$ and $N_{\alpha} \cap N_{\beta} \cong C_2$.

Define $\tau:= (x, 1, x, x)(1,2,3,4).$  Then $\tau^4 = (x,x,x,x)$ and so $\tau^8 = 1.$  Let $G_{\alpha}:= \langle N_{\alpha}, \tau \rangle,$ $G_{\beta}:= \langle N_{\beta}, \tau \rangle,$ and $G:= \langle G_{\alpha}, G_{\beta} \rangle.$  We note that $\tau$ centralizes $\overline{x}$, whereas $\overline{l}^{\tau} = (l^3, l, l^2, l^4) =\overline{l^{3}}$, and so $G_{\alpha} \cong \AGL(1,5).2$.  By similar reasoning as in \cite[Lemma 5.2]{prodaction}, we deduce that $G \cong \PSL(2,61) \Wr C_4$ and $ G_{\beta} \cong \AGL(1,3^2)$.  We also see that $G_{\alpha} \cap G_{\beta} \cong C_8$. 

Let $\Gamma:= \Cos(G,G_{\alpha},G_{\beta})$. Since $G_{\alpha}$ acts on $[G_{\alpha}{:}G_{\alpha} \cap G_{\beta}]$ as $\AGL(1,5)$ does on $\GF(5)$ and $G_{\beta}$ acts on  $[G_{\beta}{:}G_{\alpha} \cap G_{\beta}]$ as $\AGL(1,3^2)$ does on $\GF(3^2)$, we see that $\Gamma$ is a connected locally 2-arc-transitive graph with vertex valencies $\{5,9\}.$  Clearly, the action of $G$ on $[G{:}G_{\alpha}]$ is twisted diagonal, and the action of $G$ on $[G{:}G_{\beta}]$ is nondiagonal (as in \cite[Lemma 5.2]{prodaction}).  Therefore, $\Gamma$ is a locally 2-arc-transitive graph of twisted-nondiagonal type with valencies $\{5,9\}$.  
\end{ex}

\subsection{Nondiagonal-nondiagonal type}

Finally, in this subsection, we include a construction of a graph of nondiagonal-nondiagonal type.

\begin{ex}
Let $T = J_2$, the second Janko group.  By \cite{atlas}, $T$ has two conjugacy classes of elements of order three, labelled 3A and 3B, and two conjugacy classes of involutions, labelled 2A and 2B.  Moreover, the elements of type 3A are contained in a maximal subgroup isomorphic to $A_5 \times D_{10}$ which contains involutions from class 2B, and the elements of type 3B are contained in a maximal subgroup isomorphic to $A_5$ which also contains involutions of type 2B.  Furthermore, within each of these maximal subgroups the elements of order three are normalized by an involution of type 2B.  Using $\GAP$, there are subgroups $L,R < T$, each isomorphic to $S_3$, such that $L \cap R \cong C_2$, $L$ contains an element of order three of type 3A, $R$ contains an element of order three of type 3B, and $\langle L,R \rangle = T.$  Furthermore, by \cite{atlas}, the two conjugacy classes of order three are not fused by any outer automorphism of $T$.  Let $L = \langle l,x | l^3 = x^2 = ll^x = 1 \rangle$ and $R = \langle r,x | r^3 = x^2 = rr^x = 1\rangle$. 

We again use the equidistant linear code as defined in Example \ref{ex:code}.  Define $N_{\alpha}:= \langle (l,l,l,1), (l,l^{-1}, 1, l), (x,x,x,x) \rangle$ and $N_{\beta}:= \langle (r,r,r, 1), (r,r^{-1},1,r), (x,x,x,x) \rangle.$  Note that $L \cap R \cong C_2$, and, reasoning as in \cite[Lemma 5.2]{prodaction}, we deduce that $ N_{\alpha} \cong N_{\beta} \cong C_3^2{:}C_2 \not\cong L,R$. Also, given any two of the four coordinates, both $N_{\alpha}$ and $N_{\beta}$ contain an element that is the identity in one coordinate and a nonidentity element in another, so $N:= \langle N_{\alpha},N_{\beta} \rangle \cong J_2^4$.   

Define $\tau:= (x, 1, x, x)(1,2,3,4).$  Then $\tau^4 = (x,x,x,x)$ and so $\tau^8 = 1.$  Let $G_{\alpha}:= \langle N_{\alpha}, \tau \rangle,$ $G_{\beta}:= \langle N_{\beta}, \tau \rangle,$ and $G:= \langle G_{\alpha}, G_{\beta} \rangle.$  By similar reasoning as in \cite[Lemma 5.2]{prodaction}, $G \cong J_2 \Wr 4$ and $G_{\alpha} \cong G_{\beta} \cong \AGL(1,3^2)$.  We also see that $G_{\alpha} \cap G_{\beta} \cong C_8$.  

Let $\Gamma:= \Cos(G,G_{\alpha},G_{\beta})$. Since $G_{\alpha}$ (respectively $G_{\beta}$) acts on $[G_{\alpha}{:}G_{\alpha} \cap G_{\beta}]$ (respectively $[G_{\beta}{:} G_{\alpha} \cap G_{\beta}]$) as $\AGL(1,3^2)$ does on $\GF(3^2)$, we see that $\Gamma$ is a connected locally $(G,2)$-arc-transitive graph with  valencies $\{9,9\}.$  However, $\Gamma$ cannot be a standard double cover of a $(G,2)$-arc-transitive graph since $L$ and $R$ are not conjugate subgroups in  $\Aut(J_2)$.  Clearly, the action of $G$ on both $[G{:}G_{\alpha}]$ and $[G{:}G_{\beta}]$ is nondiagonal (as in \cite[Lemma 5.2]{prodaction}).  Therefore, $\Gamma$ is a locally $(G,2)$-arc-transitive graph of nondiagonal-nondiagonal type that is regular of valency 9.
\end{ex}

\subsection*{Acknowledgements}

This paper formed part of the Australian Research Council's Discovery Project DP120100446 of the first author.  The authors would also like to thank \'Akos Seress, who made this collaboration possible in 2012 by allowing the second author to visit Australia, and Luke Morgan, for providing a proof of Lemma \ref{lem:Luke} and encouraging the completion of the project.

\bibliographystyle{plain}
\bibliography{PAbib}

\end{document}